\renewcommand{\bar}{\overline}
\renewcommand{\hat}{\widehat}
\renewcommand{\tilde}{\widetilde}
\newtheorem{thm}{Theorem}[section]
\newtheorem{lem}[thm]{Lemma}
\newtheorem{exa}[thm]{Example}
\theoremstyle{definition}
\newcommand{\scr}[1]{\mathscr #1}
\definecolor{wco}{rgb}{0.5,0.2,0.3}
\numberwithin{equation}{section} \theoremstyle{remark}
\newtheorem{rem}{Remark}[section]
\newcommand{\ua}{\uparrow}
\title{{\bf Least squares estimation for path-distribution dependent stochastic differential equations}}
\author{
{\bf  Panpan Ren and Jiang-Lun Wu}\\
\footnotesize{Department of Mathematics, Swansea University,
Singleton Park, Swansea SA2 8PP, UK} \\
E-mails: 673788@swansea.ac.uk \,\, j.l.wu@swansea.ac.uk}
\begin{document}
\def\R{\mathbb R}  \def\ff{\frac} \def\ss{\sqrt} \def\B{\mathbf
B}
\def\N{\mathbb N} \def\kk{\kappa} \def\m{{\bf m}}
\def\dd{\delta} \def\DD{\Delta} \def\vv{\varepsilon} \def\rr{\rho}
\def\<{\langle} \def\>{\rangle} \def\GG{\Gamma} \def\gg{\gamma}
  \def\nn{\nabla} \def\pp{\partial} \def\EE{\scr E}
\def\d{\text{\rm{d}}} \def\bb{\beta} \def\aa{\alpha} \def\D{\scr D}
  \def\si{\sigma} \def\ess{\text{\rm{ess}}}
\def\beg{\begin} \def\beq{\begin{equation}}  \def\F{\scr F}
\def\Ric{\text{\rm{Ric}}} \def\Hess{\text{\rm{Hess}}}
\def\e{\text{\rm{e}}} \def\ua{\underline a} \def\OO{\Omega}  \def\oo{\omega}
 \def\tt{\tilde} \def\Ric{\text{\rm{Ric}}}
\def\cut{\text{\rm{cut}}} \def\P{\mathbb P} \def\ifn{I_n(f^{\bigotimes n})}
\def\C{\scr C}      \def\aaa{\mathbf{r}}     \def\r{r}
\def\gap{\text{\rm{gap}}} \def\prr{\pi_{{\bf m},\varrho}}  \def\r{\mathbf r}
\def\Z{\mathbb Z} \def\vrr{\varrho} \def\ll{\lambda}
\def\L{\scr L}\def\Tt{\tt} \def\TT{\tt}\def\II{\mathbb I}
\def\i{{\rm in}}\def\Sect{{\rm Sect}}\def\E{\mathbb E} \def\H{\mathbb H}
\def\M{\scr M}\def\Q{\mathbb Q} \def\texto{\text{o}} \def\LL{\Lambda}
\def\Rank{{\rm Rank}} \def\B{\scr B} \def\i{{\rm i}} \def\HR{\hat{\R}^d}
\def\to{\rightarrow}\def\l{\ell}
\def\8{\infty}\def\X{\mathbb{X}}\def\3{\triangle}
\def\V{\mathbb{V}}\def\M{\mathbb{M}}\def\W{\mathbb{W}}\def\Y{\mathbb{Y}}\def\1{\lesssim}

\def\La{\Lambda}\def\S{\mathbf{S}}

\renewcommand{\bar}{\overline}
\renewcommand{\hat}{\widehat}
\renewcommand{\tilde}{\widetilde}

\maketitle

\begin{abstract}
We study a least squares estimator for an unknown parameter in the
drift coefficient of a path-distribution dependent stochastic
differential equation involving a small dispersion parameter
$\vv>0$. The estimator, based on $n$ (where $n\in\mathbb{N}$) discrete time
observations of the stochastic differential equation, is shown to be
convergent weakly to the true value as $\vv\to0$ and $n\to\infty$.
This indicates that the least squares estimator obtained is
consistent with the true value. Moreover, we obtain the rate of
convergence and derive the asymptotic distribution of least squares
estimator.
\end{abstract}
\noindent
 AMS subject Classification:\  62F12, 62M05, 60G52, 60J75   \\
\noindent
 Keywords: Path-distribution dependent stochastic differential
 equation, least squares estimator, consistency, asymptotic
 distribution.
 \vskip 2cm

\section{Introduction}
Nowadays, stochastic differential equations (SDEs) are widely used
in modelling time evolution of dynamical systems influenced by
random noise, see, e.g., the monographs
\cite{F98,IkedaW,Oksendal,Protter} (and references therein).
Usually, there exist unknown parameters in such modelled systems,
such as those stochastic models with comparably easier structured
stochastic differential equations involving unknown quantities
(see,.e.g., \cite[P.2-4]{B08}). Fundamental issues are then to
estimate certain parameters (i.e., deterministic quantities)
appearing in the stochastic models by certain observations (or by
experimental data). Viewing the drift part of the SDEs as the
averaging evolution of the systems, estimating the drift parameter
of SDEs is hence an important topic. To approach the true value of
the unknown parameter, the asymptotic approach to statistical
estimation is frequently taken an advantage due to its general
applicability and relative simplicity (cf. \cite{B08}). As we know,
the estimations upon the unknown quantities are based generally on
continuous-time or discrete-time observations. Whereas, the
parameter estimation relied on continuous-time observations is a
mathematical idealisation although there is a vast literature
concerned with such topic. On the other hand, no measuring device
can follow continuously  the sample paths of the diffusion processes
involved, which are indeed rather tricky. Whence, in practice the
investigation on the parameter estimations with the help of
discrete-time observations has been received much more attention
recently. Most importantly, the parameter estimation by the aid of
discrete-time observations can be implemented conveniently with a
powerful theory of simulation schemes and numerical analysis of
diffusion processes.

So far, there are numerous methods to investigate the parameter
estimations on the unknown parameters in the drift coefficents; see,
e.g., \cite{Ku04,LS01,P199,SY06} by maximum likelihood estimator
(MLE for short), \cite{D76,Ka88,Ku04,P199} via least squares
estimator (LSE for abbreviation), and \cite{M05} through
trajectory-fitting estimator, to name a few. Diffusion processes
with small noises have been applied considerably in mathematical
finance, see, e.g., \cite{Ku01,Ta04,Y1992b} and references within.
In the past forty years, the asymptotic theory on parameter
estimations for diffusion processes with small noises has also been
developed very well, see, for instance, \cite{GS, Li16,SM03,U04,U08}
for SDEs driven by L\'{e}vy processes with arbitrary moments, and
\cite{HL,LMS,LSS} for SDEs driven by $\alpha$-stable L\'{e}vy noises
which enjoy heavy tail properties.

Recently, from the stochastic modelling perspective and diverse
demanding in practical problems, there has been increasing interest
on studying stochastic differential equations with path-distribution
coefficients, see e.g. \cite{Huang,HRW,W16}  (and references
therein). The distribution-dependent SDEs are also named as
McKean-Vlasov SDEs or mean-filed SDEs, which have been studied
intensively in the literature, see e.g. \cite{DST, LMb} and
references therein. Such kind of SDEs has been applied successfully
in stochastic differential games and stochastic optimal
optimisation, see, e.g., \cite{LM} and references within. Although
McKean-Vlasov SDEs have been applied diffusively in different
research areas, so far there is little work  on parameter
estimations except the existing literature \cite{Wen}, to the best
of our knowledge. In the present paper, we are concerned with the
LSE problem for the path-distribution stochastic differential
equations with small dispersion noise and involving unknown
parameter in the drift. Our key start point is the associated
discrete-time observations of path-distribution dependent SDEs (see
\eqref{eq1} below). We then investigate parameter estimation for
McKean-Vlasov SDEs which are not only path-dependent but also
dependent on the law of the path. Since the state space of the
window process is infinite dimensional, some new procedures need to
be put forward. We succeeded the task by carefully construct the
Euler-Maruyama (EM) discretion scheme of our path-distribution
dependent SDEs. It is also interesting to consider other type
estimations for such equations and we will study them in another
paper.

The rest of the paper is arranged as follows. In Section
\ref{sec0}, we introduce some notation, present the framework of our
paper, and construct the LSE; Section \ref{sec2} is devoted to the
consistency of LSE; Section \ref{sec3} focus on the asymptotic
distribution of LSE. Throughout this paper, we emphasise that $c>0$
is a generic constant which may change from line to line.

\section{Preliminaries}\label{sec0}
We start with some notation and terminology which will be used
later. For  $d,m\in\mathbb{N}$, the set of all positive integers,
 let $(\R^d,\<\cdot,\cdot\>,|\cdot|)$ be the
$d$-dimensional Euclinean space with the inner product
$\<\cdot,\cdot\>$ induced the norm $|\cdot|$ and $\R^d\otimes\R^m$
the collection of all $d\times m$ matrixes with real entries, which
is endowed with the Hilbert-Schmidt norm $\|\cdot\|$. ${\bf
0}\in\R^d$ denotes the zero vector. For a matrix $A$,  $A^*$ denotes
the transpose of $A.$ Concerning a square matrix $A$,  $A^{-1}$
means the inverse  of $A$ provided that $ \mbox{det} A\neq0$. For $
p\in\mathbb{N}$, let $\Theta$ be an open bounded convex subset of
$\R^p$, and $\bar\Theta$ the closure of $\Theta.$ For $r>0$ and
$x\in\R^p$, $B_r(x)$ represents  the closed ball centered at $x$
with the radius $r.$ For $z\in\R^d$, $\dd_z$ denotes Dirac's delta
measure or unit mass at the point $z.$ For a real number $a>0$,
$\lfloor a\rfloor$ stands for the integer part of $a.$ For a random
variable $\xi$, $\mathscr{L}_\xi$ denotes its law. For a fixed
finite  number $r_0>0$,
 $\C:=C([-r_0,0];\R^d)$ means the family of all continuous functions
$f:[-r_0,0]\rightarrow\R^d$, which is  a Polish  (i.e.,   separable,
complete  metric) space under the uniform norm
$\|f\|_\8:=\sup_{-r_0\le\theta\le0}|f(\theta)|$. Generally speaking,
$r_0>0$ is named as the length of memory. For a continuous map
$f:[-r_0,\8)\rightarrow\R^d$ and $t\ge0$, let $f_t\in\C$ be such
that $f_t(\theta)=f(t+\theta)$ for $ \theta\in[-r_0,0]$. In general,
$(f_t)_{t\ge0}$ is called the window (or segment) process of
$(f(t))_{t\ge-r_0}$. $ \mathcal {P}_2(\C)$ stands for the space of
all probability measures on $\C$ with the finite second-order
moment, i.e.,
$\mu(\|\cdot\|_\8^2):=\int_\C\|\zeta\|_\8^2\mu(\d\zeta)<\8$ for
$\mu\in \mathcal {P}_2(\C)$. Define the Wasserstein distance
$\mathbb{W}_2$ on $\mathcal {P}_2(\C)$ by
\begin{equation*}
\mathbb{W}_2(\mu,\nu)=\inf_{\pi\in\mathcal
{C}(\mu,\nu)}\Big(\int_\C\int_\C\|\zeta_1-\zeta_2\|_\8^2\pi(\d
\zeta_1,\d \zeta_2)\Big)^{1/2},~~~~\mu,\nu\in\mathcal {P}_2(\C),
\end{equation*}
where $\mathcal {C}(\mu,\nu)$ signifies the collection of all
probability measures on $\C\times\C$ with marginals $\mu$ and $\nu$
(i.e., $\pi\in\mathcal {C}(\mu,\nu)$ such that
$\pi(\cdot,\C)=\mu(\cdot)$ and $\pi(\C,\cdot)=\nu(\cdot)$),
respectively. Under the distance $\mathbb{W}_2$, $\mathcal
{P}_2(\C)$ is a Polish space; see, e.g., \cite[Lemma 5.3 \& Theorem
5.4]{Chen}. Let $(B(t))_{t\ge0}$ be an $m$-dimensional Brownian
motion defined on the probability space $(\OO,\F,\P)$ with the
filtration $(\F_t)_{t\ge0}$ satisfying the usual condition (i.e.,
$\F_0$ contains all $\P$-null sets and
$\F_t=\F_{t+}:=\bigcap_{s>t}\F_s$).

Through  all the  paper, we fix the time horizon $T>0.$ For  the
scale parameter $\vv\in(0,1)$, we consider a path-distribution
dependent SDE on $(\R^d, \<\cdot,\cdot\>,|\cdot|)$ in the form
\begin{equation}\label{eq1}
\d X^\vv(t)=b(X_t^\vv,\mathscr{L}_{X_t^\vv},\theta)\d
t+\vv\,\si(X_t^\vv,\mathscr{L}_{X_t^\vv})\d B(t),
~~~t\in(0,T],~~~~X_0^\vv=\xi\in\C,
\end{equation}
where $b:\C\times\mathcal {P}_2(\C)\times\Theta\rightarrow\R^d$ and
$\si:\C\times\mathcal {P}_2(\C)\rightarrow\R^d\times\R^m$. In
\eqref{eq1}, we assume that the drift $b$ and the diffusion $\si$
are known apart from the parameter $\theta\in\Theta$ and we
stipulate that  $\theta_0\in\Theta$ is the true value of
$\theta\in\Theta.$

For   any $\zeta_1,\zeta_2\in\C$ and $\mu,\nu\in\mathcal {P}_2(\C)$,
we assume that
\begin{enumerate}
\item[({\bf A1})]  There exist
$\aa_1,\aa_2,\bb_1,\bb_2>0$ such that
\begin{equation*}
\sup_{\theta\in\bar\Theta}|b(\zeta_1,\mu,\theta)-b(\zeta_2,\nu,\theta)|^2\le\aa_1\|\zeta_1-\zeta_2\|_\8^2+\aa_2\mathbb{W}_2(\mu,\nu)^2,
\end{equation*}
and
\begin{equation*}
\|\si(\zeta_1,\mu)-\si(\zeta_2,\nu)\|^2\le
\bb_1\|\zeta_1-\zeta_2\|_\8^2+\bb_2\mathbb{W}_2(\mu,\nu)^2;
\end{equation*}

\item[({\bf A2})]For each random variable $\zeta\in\C$ with $\mathscr{L}_\zeta\in\mathcal
{P}_2(\C)$, $(\si\si^*)(\zeta,\mathscr{L}_\zeta)$ is invertible, and
there exists an $L_1>0$ such that
\begin{equation*}
\|(\si\si^*)^{-1}(\zeta_1,\mu)-(\si\si^*)^{-1}(\zeta_2,\nu)\|\le
L_1\Big\{\|\zeta_1-\zeta_2\|_\8+\mathbb{W}_2(\mu,\nu)\Big\};
\end{equation*}

\item[({\bf A3})] For the initial value $X^\vv_0=\xi$, there exists an $L_2>0$ such that
\begin{equation*}
|\xi(t)-\xi(s)|\le L_2|t-s|,~~~t,s\in[-r_0,0].
\end{equation*}

\end{enumerate}

We further assume that
\begin{enumerate}
\item[(\bf B1)] There exists $K_1>0$ such that
\begin{equation*}
\sup_{\theta\in\bar\Theta}\|(\nn_\theta
b)(\zeta_1,\mu,\theta)-(\nn_\theta b)(\zeta_2,\nu,\theta)\|\le
K_1\Big\{\|\zeta_1-\zeta_2\|_\8+\mathbb{W}_2(\mu,\nu)\Big\},
\end{equation*}
where $(\nn_\theta b)$ means the gradient operator w.r.t. the third
spatial variable.

\item[(\bf B2)]
There exists $K_2>0$ such that
\begin{equation*}
\sup_{\theta\in\bar\Theta}\|(\nn_\theta(\nn_\theta
b^*))(\zeta_1,\mu,\theta)-(\nn_\theta(\nn_\theta
b^*))(\zeta_2,\nu,\theta)\|\le
K_2\Big\{\|\zeta_1-\zeta_2\|_\8+\mathbb{W}_2(\mu,\nu)\Big\}.
\end{equation*}

\end{enumerate}

%\begin{enumerate}
%%\item[({\bf B1})] $\si(\cdot,\cdot)$ is
%%continuous on $\C$ and $\mathcal {P}_2(\C)$, and there exist
%%$\bb_1,\bb_2,K_1>0$ such that
%%\begin{equation*}
%%\|\si(\zeta_1,\mu)-\si(\zeta_2,\nu)\|^2\le
%%\bb_1\|\zeta_1-\zeta_2\|_\8^2+\bb_2\mathbb{W}_2(\mu,\nu)^2.
%%\end{equation*}
%%and
%%\begin{equation*}
%%\|\si(0,\mu)\|^2\le K_1 (1+\mu(\|\cdot\|_\8^2)).
%%\end{equation*}
%
%%\item[({\bf B2})]
%
%\end{enumerate}

%For  the initial value $X_0^\vv=\xi\in\C$, we assume that
%
%\begin{enumerate}
%\item[({\bf C})]
%\end{enumerate}

Before we move forward,  let's give some remarks. %the assumptions
%above. From({\bf A1}), we deduce that
%\begin{equation}\label{e2}
%|b(\zeta_1,\mu,\theta)-b(\zeta_2,\mu,\theta)|\le
%L_1(1+\|\zeta_1\|_\8^{q_1}+\|\zeta_2\|_\8^{q_1})\|\zeta_1-\zeta_2\|_\8.
%\end{equation}
%by taking $\mu=\nu$, and that
%\begin{equation}\label{e1}
%|b(\zeta,\mu,\theta)-b(\zeta,\nu,\theta)|\le
%L_1\mathbb{W}_2(\mu,\nu)
%\end{equation}
%by choosing $\zeta_1=\zeta_2=\zeta\in\C$. Then,
Under ({\bf A1}), \eqref{eq1} admits   a unique strong solution
$(X^\vv(t))_{t\in[-r_0,T]}$; see, for instance, \cite[Theorem
3.1]{HRW}. For more details on existence and uniqueness of strong
solutions to distribution-dependent SDEs, we would like to refer to,
e.g.,  \cite{DST,MV,W16} and references within. As far as existence
and uniqueness of weak solutions are concerned, please consult,
e.g., \cite{JM,LMb,Ve} for reference. ({\bf B1}) and ({\bf B2}) are
imposed merely to discuss the asymptotic distribution of LSE
constructed below; see Theorem \ref{th2}. ({\bf A3}) is put to
analyze continuity of the window process  associated with
\eqref{eq2}; see Lemma \ref{le5} for more details. Obviously, ({\bf
A2}) holds provided that
$\si(\cdot,\cdot)\equiv\si\in\R^d\otimes\R^m$, a constant matrix,
such that $\si\si^*$ is invertible. Moreover, for the scalar setting
of \eqref{eq1}, ({\bf A2}) is also true in  case of
$\si(x,\mu)=1+|x|$ for any $x\in\R$ and $\mu\in\mathcal {P}_2(\R)$.

Without loss of generality, we assume the stepsize
$\dd=\frac{T}{n}=\ff{r_0}{M}$ for some integers $n,M\in\mathbb{N}$
sufficiently large. Suppose that the solution process
$(X^\vv(t))_{t\in[-r_0,T]}$ is observed at regularly spaced time
points $t_k=k\delta$ for $k=0,1,\cdots,n$. In
this paper, our goal is  to investigate the LSE %for the true value
%$\theta_0\in\Theta$
on the parameter $\theta\in\Theta$ based on the sampling data
$(X^\vv(t_k)^n_{k=0}$ with small dispersion $\vv$ and large sample
size $n$ (i.e., small step size $\dd$).

The discrete-time Euler-Maruyama (EM) scheme corresponding to
\eqref{eq1} admits the form
\begin{equation}\label{q3}
Y^\vv(t_k)=Y^\vv(t_{k-1})+b(\hat Y^\vv_{t_{k-1}},\mathscr{L}_{\hat
Y^\vv_{t_{k-1}}},\theta)\dd+\vv\,\si(\hat
Y^\vv_{t_{k-1}},\mathscr{L}_{\hat Y^\vv_{t_{k-1}}})\triangle
B_k,~~~k\ge1,
\end{equation}
and $Y^\vv(t)=X^\vv(t)=\xi(t), t\in[-r_0,0].$ In \eqref{q3}, $\hat
Y_{k\dd}^\vv=\{ \hat Y_{k\dd}^\vv(s):-r_0\le s\le0\}$ is a
$\C$-valued random variable defined as follows: for any
$s\in[-(i+1)\dd,-i\dd]$, $i=1,\cdots,M-1$,
\begin{equation}\label{w2}
\hat
Y_{k\dd}^\vv(s)=Y^\vv((k-i)\dd)+\ff{s+i\dd}{\dd}\{Y^\vv((k-i)\dd)
-Y^\vv((k-i-1)\dd) \},~~~
\end{equation}
i.e., $\hat Y_{k\dd}^\vv$ is the linear interpolation of
$Y^\vv((k-M)\dd)$,
$Y^\vv((k-(M-1))\dd),\cdots,Y^\vv((k-1)\dd),Y^\vv(k\dd)$, and
$\triangle B_k:=B(k\dd)-B((k-1)\dd)$, the increment of Brownian
motion. Motivated by \cite{LMS,LSS,SM03}, for our present setting
 we construct the following contrast function
\begin{equation}\label{eq2}
\Psi_{n,\vv}(\theta)=\vv^{-2}\delta^{-1}\sum_{k=1}^nP_k^*(\theta)\LL_{k-1}^{-1}P_k(\theta),
\end{equation}
where
\begin{equation}\label{w1}
P_k(\theta):=Y^\vv(t_k)-Y^\vv(t_{k-1})-b(\hat
Y_{t_{k-1}}^\vv,\mathscr{L}_{\hat Y_{t_{k-1}}^\vv},\theta)\dd
~~~\mbox{ and }~~~~\LL_k:=(\si\si^*)(\hat
Y_{t_k}^\vv,\mathscr{L}_{\hat Y_{t_k}^\vv})
\end{equation}
for $k=1,\cdots,n$. %We remark that ({\bf B2}) enables the mapping
%$\Psi_{n,\vv}(\cdot)$ defined in \eqref{eq2} is well defined.
To achieve the LSE of $\theta\in\Theta$, it   suffices to choose an
argument $\hat\theta_{n,\vv}\in\Theta$ such that
\begin{equation}\label{eq3}
\Psi_{n,\vv}(\hat\theta_{n,\vv})=\min_{\theta\in\Theta}\Psi_{n,\vv}(\theta).
\end{equation}
Next, we write $\hat\theta_{n,\vv}\in\Theta$  satisfying \eqref{eq3}
by
\begin{equation*}
\hat\theta_{n,\vv}=\arg\min_{\theta\in\Theta}\Psi_{n,\vv}(\theta).
\end{equation*}
Set
\begin{equation*}
\Phi_{n,\vv}(\theta):=\vv^2(\Psi_{n,\vv}(\theta)-\Psi_{n,\vv}(\theta_0)).
\end{equation*}
 It follows from \eqref{eq3} that
\begin{equation}\label{eq4}
\Phi_{n,\vv}(\hat\theta_{n,\vv})=\min_{\theta\in\Theta}\Phi_{n,\vv}(\theta).
\end{equation}
Likewise, we reformulate $\hat\theta_{n,\vv}\in\Theta$ ensuring
\eqref{eq4} to hold true as
\begin{equation}\label{eq5}
\hat\theta_{n,\vv}=\arg\min_{\theta\in\Theta}\Phi_{n,\vv}(\theta).
\end{equation}
Through the whole paper, $\hat\theta_{n,\vv}$ such that \eqref{eq5}
holds  is named as the LSE of $\theta\in\Theta$.

Before we end this section, we give some remarks.

\begin{rem}
{\rm If $\si(\cdot,\cdot)\in\R^d\otimes\R^d$ is invertible,
\eqref{eq3} can be rewritten as
\begin{equation*}
\ff{\triangle B_k}{\ss\dd}=\ff{1}{\vv^{-1}\ss\dd}\si^{-1}(\hat
Y^\vv_{t_{k-1}}, \mathscr{L}_{\hat Y^\vv_{t_{k-1}}})P_k(\theta).
\end{equation*}
Then, we can design the contrast function $\Psi_{n,\vv}(\cdot)$ as
\begin{equation*}
\begin{split}
\Psi_{n,\vv}(\theta)&=\vv^{-2}\dd^{-1}|\si^{-1}(\hat Y^\vv_{t_{k-1}},\mathscr{L}_{\hat Y^\vv_{t_{k-1}}})P_k(\theta)|^2\\
&=\vv^{-2}\dd^{-1}P_k^*(\theta)((\si^{-1})^*\si^{-1})(\hat Y^\vv_{t_{k-1}},\mathscr{L}_{\hat Y^\vv_{t_{k-1}}})P_k(\theta)\\
&=\vv^{-2}\dd^{-1}P_k^*(\theta)\LL_{k-1}^{-1}P_k(\theta).
\end{split}
\end{equation*}
Motivated by the invertible setup above, we establish the contrast
function for the setting that the diffusion $\si(\cdot,\cdot)$ need
not to be invertible; see \eqref{eq2} for further details. On the
other hand, if $b(\cdot,\cdot,\theta)$ is explicit w.r.t. the
parameter $\theta$, then the LSE $\hat\theta_{n,\vv}$ can indeed be
obtained by Fermat's theorem. }
\end{rem}

\begin{rem}
{\rm   Formally, the contrast function $\Psi_{n,\vv}$ can be defined
  as in \eqref{eq2} with $\hat Y^\vv_{t_k}$ being replaced by
$X_{t_k}^\vv$ in \eqref{eq3}. Nevertheless,  $X_{t_k}^\vv$ cannot be
available provided that $(X^\vv(t))_{t\in[0,T]}$ is observed only at
the points  $t=k\dd$. So, in our paper, we approximate the window
process $X_{t_k}^\vv$  via the linear interpolation; see \eqref{w2}
for more details. }
\end{rem}

\begin{rem}
{\rm We remark that our contrast function is established on the
basis of EM scheme. With regard to path-distribution dependent SDEs,
if the global Lipschitz condition ({\bf A1}) is replaced by the
monotone condition, then the contrast function \eqref{eq2} will no
longer work  due to the fact that the EM numerical solution will
explode in finite time. So, for such case, we need to establish the
LSE for the unknown parameter based on the new contrast function,
which will be reported in our forthcoming paper. }
\end{rem}

\section{The consistency of LSE}\label{sec2}
 First of all, let's consider the following deterministic
ordinary differential equation
\begin{equation}\label{eq6}
\d X(t)=b(X_t^0,\mathscr{L}_{X_t^0},\theta_0)\d
t,~~~t>0,~~~X_0^0=\xi\in\C.
\end{equation}
Under ({\bf A1}), \eqref{eq6} possesses  a unique solution
$(X^0(t))_{ t\ge-r_0}$. Herein, it is worth pointing out that
\eqref{eq1} and \eqref{eq6} share the same initial datum. For the
sake of notation brevity, for a random variable $\zeta\in\C$ with
$\mathscr{L}_\zeta\in\mathcal {P}_2(\C)$, let
\begin{equation}\label{c1}
\Lambda(\zeta,\theta,\theta_0) =b(\zeta,\mathscr{L}_\zeta,\theta_0)
-b(\zeta,\mathscr{L}_\zeta,\theta)~~~\mbox{ and
}~~~\hat\si(\zeta)=(\si\si^*)^{-1}(\zeta,\mathscr{L}_\zeta).
\end{equation}
Set, for any $\theta\in\Theta,$
\begin{equation}\label{e3}
\Xi(\theta):=\int_0^T\Lambda^*(X_t^0,\theta,\theta_0)\hat\si(X_t^0)\Lambda(X_t^0,\theta,\theta_0)\d
t,
\end{equation}
where $(X_t)$ is the segment process generated by the solution
$(X(t))$ to \eqref{eq6}.

Our first main result,   which is concerned with the consistency of
the LSE of $\theta\in\Theta$, is stated as below.
\begin{thm}\label{th1}
{\rm Let ({\bf A1})-({\bf A3}) hold  and assume further
$\Xi(\theta)>0$ for any $\theta\in\bar\Theta$. Then
\begin{equation*}
\hat\theta_{n,\vv}\rightarrow\theta_0~~~~\mbox{ in probability as }
 \vv\rightarrow0 ~~\mbox{ and }~~n\to\8.
\end{equation*}
}
\end{thm}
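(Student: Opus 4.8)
The plan is to establish consistency via the standard argmin/M-estimation route: show that the random objective function $\Phi_{n,\vv}(\theta)$ converges (in probability, uniformly in $\theta$) to a deterministic limit function that is minimized uniquely at $\theta_0$, and then transfer this to convergence of the minimizers. First I would expand $P_k(\theta)$ using the EM recursion \eqref{q3}. Writing $P_k(\theta)=P_k(\theta_0)+\{b(\hat Y^\vv_{t_{k-1}},\mathscr{L}_{\hat Y^\vv_{t_{k-1}}},\theta_0)-b(\hat Y^\vv_{t_{k-1}},\mathscr{L}_{\hat Y^\vv_{t_{k-1}}},\theta)\}\dd$ and noting that $P_k(\theta_0)=\vv\,\si(\hat Y^\vv_{t_{k-1}},\mathscr{L}_{\hat Y^\vv_{t_{k-1}}})\triangle B_k$, I obtain an explicit decomposition of $\Phi_{n,\vv}(\theta)=\vv^2(\Psi_{n,\vv}(\theta)-\Psi_{n,\vv}(\theta_0))$ into three groups of terms: a purely deterministic-drift quadratic term of order $\dd\sum_k \Lambda^*\LL_{k-1}^{-1}\Lambda$ (the dominant piece), a cross term linear in the Brownian increments $\triangle B_k$ of order $\vv$, and terms of order $\vv^2$. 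The key point is that the factor $\vv^2$ in the definition of $\Phi_{n,\vv}$ exactly cancels the $\vv^{-2}$ in $\Psi_{n,\vv}$, so the leading term survives as $\vv\to0$.

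Next I would identify the limit. As $n\to\8$ (equivalently $\dd\to0$) the Riemann-type sum $\dd\sum_{k=1}^n \Lambda^*(\hat Y^\vv_{t_{k-1}},\theta,\theta_0)\,\LL_{k-1}^{-1}\,\Lambda(\hat Y^\vv_{t_{k-1}},\theta,\theta_0)$ should converge to the integral $\Xi(\theta)$ defined in \eqref{e3}, provided the EM segment process $\hat Y^\vv_{t_{k-1}}$ approximates the limiting deterministic window process $X^0_t$ from \eqref{eq6}. This approximation has two independent ingredients that must be controlled simultaneously: the EM discretization error (as $\dd\to0$) and the vanishing-noise limit (as $\vv\to0$, the SDE \eqref{eq1} degenerates to the ODE \eqref{eq6}). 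I would prove, using ({\bf A1}) together with Gronwall's inequality and standard moment estimates for stochastic integrals, that $\E\sup_{t\le T}\|\hat Y^\vv_{t}-X^0_t\|_\8^2\to0$ as $\vv\to0,\,n\to\8$; the Lipschitz regularity of $\hat\si$ from ({\bf A2}) and of $b$ in its spatial arguments from ({\bf A1}) then lets me replace $\LL_{k-1}^{-1}$ and $\Lambda$ evaluated at $\hat Y^\vv_{t_{k-1}}$ by the same quantities at $X^0_{t_{k-1}}$. The cross and $\vv^2$ remainder terms I would show vanish in probability, using Doob's/Burkholder inequalities on the martingale-like sums $\sum_k \Lambda^*\LL_{k-1}^{-1}\si\,\triangle B_k$ whose quadratic variation carries the extra $\dd$ factor. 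Thus $\Phi_{n,\vv}(\theta)\to\Xi(\theta)$ in probability, uniformly on $\bar\Theta$.

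With uniform convergence in hand, consistency follows from the argmin continuous-mapping principle. The hypothesis $\Xi(\theta)>0$ for all $\theta\in\bar\Theta$ combined with $\Xi(\theta_0)=0$ (which is immediate since $\Lambda(\cdot,\theta_0,\theta_0)\equiv0$) identifies $\theta_0$ as the unique minimizer of the limit function $\Xi$; here I interpret $\Xi(\theta)>0$ as meaning the scalar quadratic form is strictly positive, i.e. $\theta_0$ is a well-separated minimum. Since $\hat\theta_{n,\vv}=\arg\min_{\theta}\Phi_{n,\vv}(\theta)$ and $\Phi_{n,\vv}\to\Xi$ uniformly with $\Xi$ having a unique separated minimizer on the compact set $\bar\Theta$, the standard argument (for every neighbourhood $U$ of $\theta_0$, $\inf_{\theta\notin U}\Xi(\theta)>0=\Xi(\theta_0)$, so uniform convergence forces $\hat\theta_{n,\vv}\in U$ with probability tending to one) yields $\hat\theta_{n,\vv}\to\theta_0$ in probability.

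The main obstacle I anticipate is the uniform-in-$\theta$ control of the convergence $\Phi_{n,\vv}(\theta)\to\Xi(\theta)$, rather than mere pointwise convergence, because the argmin transfer genuinely requires uniformity. Establishing this calls for an equicontinuity or stochastic-tightness argument over the compact parameter set $\bar\Theta$; here the Lipschitz-in-$\theta$ structure of $b$ (supremum over $\bar\Theta$ in ({\bf A1}), reinforced by the gradient bounds ({\bf B1})) should give the needed modulus-of-continuity estimates, allowing a covering/chaining argument to upgrade pointwise to uniform convergence. The secondary technical difficulty is handling the coupled double limit $(\vv\to0,\,n\to\8)$ cleanly, since the EM segment process $\hat Y^\vv$ depends on both parameters and the interpolation \eqref{w2} introduces additional terms; careful bookkeeping via the auxiliary deterministic interpolated process is required so that the two limits can be taken jointly without hidden interaction between the discretization scale $\dd$ and the noise scale $\vv$.
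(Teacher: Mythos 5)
Your proposal follows essentially the same route as the paper: decompose $\Phi_{n,\vv}(\theta)$ via $P_k(\theta)=P_k(\theta_0)+\Lambda(\hat Y^\vv_{t_{k-1}},\theta,\theta_0)\dd$ into a quadratic Riemann sum converging to $\Xi(\theta)$ (using moment bounds and the estimate $\E\|\hat Y^\vv_{\lfloor t/\dd\rfloor\dd}-X_t^0\|_\8^2\le c\vv^2+c_\bb\dd^\bb$) plus an $O(\vv)$ stochastic cross term, and then invoke the well-separated-minimum argmin criterion (the paper cites van der Vaart, Theorem 5.9). The only differences are cosmetic: your ``order $\vv^2$ remainder'' actually cancels exactly in the difference $\Psi_{n,\vv}(\theta)-\Psi_{n,\vv}(\theta_0)$, and your explicit concern for uniformity in $\theta$ is, if anything, more careful than the paper's treatment of that step.
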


The proof of Theorem \ref{th1} is based on several auxiliary lemmas
below.

%The lemma below demonstrates that the window process associated with
%\eqref{eq1} is continuous  in the strong sense  whenever  the
%stepsize  goes to zero.

\begin{lem}\label{lem0}
{\rm Under ({\bf A1}), for any $p>0$, there exists $C_{p,T}>0$ such
that

\begin{equation}\label{r4}
\sup_{0\le t\le T}\E\|Y^\vv_{\lfloor t/\dd\rfloor\dd}\|_\8^p \le
C_{p,T}(1+\|\xi\|_\8^p),
\end{equation}
and
\begin{equation}\label{0r4}
\E\Big(\sup_{-r_0\le t\le T}|X^\vv(t)|^p\Big)\le
C_{p,T}(1+\|\xi\|_\8^p).
\end{equation}

%Moreover, there exists  $C_T$
%\begin{equation}\label{r6}
%\sup_{0\le t\le T}\E\|X_t^\vv-X_t^0\|_\8^2\le C_T\vv^2.
%\end{equation}
}
\end{lem}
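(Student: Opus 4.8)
The plan is to establish the two moment bounds \eqref{r4} and \eqref{0r4} separately, treating the continuous-time solution \eqref{eq1} and the Euler--Maruyama scheme \eqref{q3} in parallel, since both rely on the same underlying mechanism: the linear growth of the coefficients that follows from the Lipschitz assumption ({\bf A1}). First I would note that ({\bf A1}) yields linear growth bounds of the form $|b(\zeta,\mu,\theta)|^2 \le c(1+\|\zeta\|_\8^2+\mu(\|\cdot\|_\8^2))$ and similarly for $\si$, obtained by comparing with the fixed reference point $({\bf 0},\dd_{\bf 0})$ and absorbing the constants $|b({\bf 0},\dd_{\bf 0},\theta)|$ and $\|\si({\bf 0},\dd_{\bf 0})\|$, which are finite uniformly in $\theta\in\bar\Theta$ thanks to the supremum in ({\bf A1}).

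For the continuous-time bound \eqref{0r4}, I would apply It\^o's formula to $|X^\vv(t)|^p$ (for $p\ge 2$; smaller $p$ then follows by Jensen/H\"older), take suprema, and use the Burkholder--Davis--Gundy inequality to control the martingale part coming from the $\vv\,\si\,\d B$ term. The key observation is that the distribution-dependence is handled by replacing $\mathscr L_{X_t^\vv}(\|\cdot\|_\8^2)$ with $\E\|X_t^\vv\|_\8^2$, which is itself bounded by $\E\big(\sup_{-r_0\le s\le t}|X^\vv(s)|^2\big)$; this closes the estimate on the same quantity being bounded. After collecting terms, one arrives at an integral inequality of the form
\begin{equation*}
\E\Big(\sup_{-r_0\le s\le t}|X^\vv(s)|^p\Big)\le c(1+\|\xi\|_\8^p)+c\int_0^t\E\Big(\sup_{-r_0\le u\le s}|X^\vv(u)|^p\Big)\d s,
\end{equation*}
and Gronwall's inequality delivers \eqref{0r4} with a constant depending only on $p$ and $T$. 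Here $\vv\in(0,1)$ is harmless since it only multiplies the diffusion term.

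For the discrete bound \eqref{r4}, I would argue by induction on $k$, writing $Y^\vv(t_k)$ from \eqref{q3} and taking $p$-th moments; the martingale increments $\triangle B_k$ are independent of $\F_{t_{k-1}}$ with variance $\dd$, so their contribution is controlled after conditioning. The point requiring care is relating $\|\hat Y^\vv_{t_{k-1}}\|_\8$, and the segment-process supremum norm appearing through the interpolation \eqref{w2}, back to the values $Y^\vv((k-i)\dd)$: since $\hat Y^\vv_{t_{k-1}}$ is a convex-combination (linear) interpolation of finitely many grid values, one has $\|\hat Y^\vv_{t_{k-1}}\|_\8\le \max_{0\le i\le M}|Y^\vv((k-1-i)\dd)|$, so the supremum norm is dominated by the maximum of the relevant nodal values. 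Feeding this into the linear-growth estimate and using $\dd=T/n$ so that $n\dd=T$ is fixed, a discrete Gronwall lemma then produces a bound uniform in $n$ and in $\vv\in(0,1)$.

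The main obstacle I anticipate is the self-consistent treatment of the distribution dependence together with the path/segment structure: the Wasserstein term in ({\bf A1}) couples the estimate for a single trajectory to the law of the whole process, and the segment norm $\|X^\vv_t\|_\8$ reaches back over the memory window $[-r_0,0]$, so one must be careful that the quantity being Gronwall-estimated genuinely dominates all terms on the right-hand side, including those involving past times $s<t$. The cleanest way to sidestep this is to observe that $\E\|X_t^\vv\|_\8^2\le \E\big(\sup_{-r_0\le s\le t}|X^\vv(s)|^2\big)$ and to run the entire argument on the non-decreasing envelope $t\mapsto \E\big(\sup_{-r_0\le s\le t}|X^\vv(s)|^p\big)$, which makes the memory and the law contributions both subordinate to a single monotone functional; the Gronwall step then goes through unchanged.
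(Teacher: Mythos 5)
Your proposal is correct, and for the continuous-time bound \eqref{0r4} it runs essentially parallel to the paper: linear growth from ({\bf A1}) via comparison with the reference point $(\zeta_0,\dd_{\zeta_0})$, control of the Wasserstein term by $\mathbb{W}_2(\mathscr{L}_{X_t^\vv},\dd_{\zeta_0})^2\le\E\|X_t^\vv\|_\8^2$, and Gronwall applied to the monotone envelope $t\mapsto\E\big(\sup_{-r_0\le s\le t}|X^\vv(s)|^p\big)$ (the paper works from the integral representation with H\"older and BDG rather than It\^o's formula, but this is immaterial). Where you genuinely diverge is in the treatment of \eqref{r4}: you propose a discrete induction on the grid values $Y^\vv(t_k)$ closed by a discrete Gronwall lemma, whereas the paper introduces the continuous-time EM interpolant $\tilde Y^\vv$ of \eqref{r1}, proves the pointwise domination $\|\hat Y^\vv_{\lfloor t/\dd\rfloor\dd}\|_\8\le 2\sup_{-r_0\le s\le t}|\tilde Y^\vv(s)|$ (your sharper observation that the linear interpolation is a convex combination, so the segment norm is bounded by the maximum of the nodal values, is a cleaner version of this), and then runs a single continuous Gronwall argument on $\Gamma(t)=1+\E\big(\sup_{-r_0\le s\le t}|\tilde Y^\vv(s)|^p\big)$. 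Both routes work; your discrete route is more elementary and avoids introducing an auxiliary process, but it does require a discrete Burkholder--Davis--Gundy (or Doob $L^p$) inequality to control the $p$-th moment of the running maximum of the martingale sum $\sum_{j\le k}\vv\,\si(\hat Y^\vv_{t_{j-1}},\mathscr{L}_{\hat Y^\vv_{t_{j-1}}})\triangle B_j$ --- ``conditioning on the increments'' alone does not handle the supremum over $k$ at the level of $p$-th moments, so you should state that tool explicitly. The paper's continuous interpolant, by contrast, lets it invoke the standard BDG inequality directly and, more importantly, the process $\tilde Y^\vv$ and the bound \eqref{r5} are reused in the subsequent convergence lemmas (e.g.\ in the proof of \eqref{y4}), so the paper's choice amortises the extra construction.
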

\begin{proof}
By H\"older's inequality, it is sufficient to show that \eqref{r4}
and \eqref{0r4} holds, respectively, for  any $p\ge2.$ Herein, we
only focus on the argument of \eqref{r4}   since \eqref{0r4} can be
done in a similar way.

Define the continuous-time EM scheme associated with \eqref{eq1}
\begin{equation}\label{r1}
\d\tt Y^\vv(t)=b(\hat Y^\vv_{\lfloor
t/\dd\rfloor\dd},\mathscr{L}_{\hat Y^\vv_{\lfloor
t/\dd\rfloor\dd}},\theta)\d t+\vv\si(\hat Y^\vv_{\lfloor
t/\dd\rfloor\dd},\mathscr{L}_{\hat Y^\vv_{\lfloor
t/\dd\rfloor\dd}})\d B(t),~~~t>0
\end{equation}
with $\tt Y^\vv(t)=X^\vv(t)=\xi(t)$ for $ t\in[-r_0,0],$ where $\hat
Y^\vv_{\lfloor t/\dd\rfloor\dd}(\cdot)$ is defined as in \eqref{w2}.
It is straightforward to check $\tt Y^\vv(k\dd)=Y^\vv(k\dd)$ for any
integer $k\in[-M,n]$. For any $t\in[0,T]$, a direct calculation
shows from \eqref{w2} that
\begin{equation}\label{r5}
\begin{split}
&\|\hat Y^\vv_{\lfloor t/\dd\rfloor\dd}\|_\8\\&=\sup_{-r_0\le
v\le0}|\hat Y^\vv_{\lfloor t/\dd\rfloor\dd}(v)|\\
%&=\max_{k=0,\cdots,M-1}\sup_{-(k+1)\dd\le v\le-k\dd}|Y^\vv_{\lfloor
%t/\dd\rfloor\dd}(v)|\\
&=\max_{k=0,\cdots,M-1}\sup_{-(k+1)\dd\le
v\le-k\dd}\Big|\ff{(k+1)\dd+v}{\dd}Y^\vv((\lfloor
t/\dd\rfloor-k)\dd)-\ff{k\dd+v}{\dd}Y^\vv((\lfloor
t/\dd\rfloor-k-1)\dd)\Big|\\
&\le\max_{k=0,\cdots,M-1}\sup_{-(k+1)\dd\le v\le-k\dd}\Big(|\tt
Y^\vv(\lfloor t/\dd\rfloor\dd-k\dd)|+|\tt Y^\vv(\lfloor
t/\dd\rfloor\dd-(k+1)\dd)|\Big)\\
%&\le 2\|\tt Y^\vv_{\lfloor t/\dd\rfloor\dd}\|_\8\\
&\le2\sup_{-r_0\le s\le t}|\tt Y^\vv(s)|,
\end{split}
\end{equation}
where in the first inequality  we have used the facts that  $\tt
Y^\vv(k\dd)=Y^\vv(k\dd)$ for any integer $k\in[-M,n]$ and that, for
any $v\in[-(k+1)\dd,-k\dd]$,
\begin{equation*}
\ff{(k+1)\dd+v}{\dd}\in[0,1]~~~\mbox{ and }~~~
\ff{k\dd+v}{\dd}\in[-1,0].
\end{equation*}
From ({\bf A1}), one has, for any $\zeta\in\C$ and $\mu\in\mathcal
{P}_2(\C)$,
\begin{equation}\label{r2}
|b(\zeta,\mu,\theta)|^2\le2\,\Big\{\aa_1\|\zeta\|_\8^2+\aa_2\mathbb{W}_2(\mu,\dd_{\zeta_0})^2+|b(\zeta_0,\dd_{\zeta_0},\theta)|^2\Big\},
\end{equation}
and
\begin{equation}\label{r3}
\|\si(\zeta,\mu)\|^2\le2\,\Big\{\bb_1\|\zeta\|_\8^2+\bb_2\mathbb{W}_2(\mu,\dd_{\zeta_0})^2+\|\si(\zeta_0,\dd_{\zeta_0})\|^2\Big\},
\end{equation}
where $\zeta_0(s)={\bf0}\in\R^d$ for any $s\in[-r_0,0].$ For any
$p\ge2,$ by H\"older's inequality and Burkhold-Davis-Gundy's (BDG's
for brevity) inequality (see, e.g., \cite[Theorem 7.3, P.40]{M08}),
we deduce from  \eqref{r2} and \eqref{r3} that
\begin{equation*}
\begin{split}
\Gamma(t):&=1+\E\Big(\sup_{-r_0\le s\le t}|\tt Y^\vv(s)|^p\Big)\\
%&\le 1+\|\xi\|_\8^p+\E\Big(\sup_{0\le s\le t}|\tt Y^\vv(s)|^p\Big)\\
%&\le c\,\|\xi\|_\8^p+c\,\E\Big(\sup_{0\le s\le
%t}|\tt Y^\vv(s)|^p\Big)\\
%&\le 1+c\,\|\xi\|_\8^p+c\,t^{p-1}\int_0^t\E|b(Y^\vv_{\lfloor
%s/\dd\rfloor\dd},\mathscr{L}_{Y^\vv_{\lfloor
%s/\dd\rfloor\dd}},\theta)|^p\d s\\
%&\quad+c\,\E\Big(\sup_{0\le s\le t}\Big|\int_0^s\si(Y^\vv_{\lfloor
%s/\dd\rfloor\dd},\mathscr{L}_{Y^\vv_{\lfloor s/\dd\rfloor\dd}})\d
%B(s)\Big|^p\Big)\\
&\le1+ c\,\|\xi\|_\8^p+c\,t^{p-1}\int_0^t\E|b(\hat Y^\vv_{\lfloor
s/\dd\rfloor\dd},\mathscr{L}_{\hat Y^\vv_{\lfloor
s/\dd\rfloor\dd}},\theta)|^p\d s+c\,\E\Big(\int_0^t\|\si(\hat
Y^\vv_{\lfloor s/\dd\rfloor\dd},\mathscr{L}_{\hat Y^\vv_{\lfloor
s/\dd\rfloor\dd}})\|^2\d s\Big)^{p/2}\\
&\le1+ c\,\|\xi\|_\8^p+c(t^{p-1}+t^{\ff{p-2}{2}})\int_0^t\{\E|b(\hat
Y^\vv_{\lfloor s/\dd\rfloor\dd},\mathscr{L}_{\hat Y^\vv_{\lfloor
s/\dd\rfloor\dd}},\theta)|^p+\E\|\si(\hat Y^\vv_{\lfloor
s/\dd\rfloor\dd},\mathscr{L}_{\hat Y^\vv_{\lfloor
s/\dd\rfloor\dd}})\|^p\}\d s\\
&\le1+
c\,\|\xi\|_\8^p+c(t^{p-1}+t^{\ff{p-2}{2}})\int_0^t\{1+\E\|\hat
Y^\vv_{\lfloor
s/\dd\rfloor\dd}\|_\8^p+\mathbb{W}_2(\mathscr{L}_{\hat
Y^\vv_{\lfloor
s/\dd\rfloor\dd}},\dd_{\zeta_0})^p\}\d s\\
&\le1+
c\,\|\xi\|_\8^p+c(t^{p-1}+t^{\ff{p-2}{2}})\int_0^t\{1+\E\|\hat
Y^\vv_{\lfloor s/\dd\rfloor\dd}\|_\8^p\}\d s.
\end{split}
\end{equation*}
This, together with \eqref{r5}, leads to
\begin{equation*}
\Gamma(t) \le1+
c\,\|\xi\|_\8^p+c(t^{p-1}+t^{\ff{p-2}{2}})\int_0^t\Gamma(s)\d s.
\end{equation*}
Then, the desired assertion \eqref{r4} follows from Gronwall's
inequality and \eqref{r5}.
\end{proof}

\begin{lem}\label{le5}
{\rm Let ({\bf A1})  be satisfied. Then, there is a constant $C_T>0$
such that
\begin{equation}\label{r6}
\sup_{0\le t\le T}\E\|X_t^\vv-X_t^0\|_\8^2\le C_T\,\vv^2.
\end{equation}
}
\end{lem}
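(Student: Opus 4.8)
The plan is to control the difference process by a pathwise Gronwall argument driven by the Lipschitz hypothesis ({\bf A1}). Since \eqref{eq1} (taken at the true value $\theta_0$) and the limiting ODE \eqref{eq6} share the same initial datum $\xi$, the two paths coincide on $[-r_0,0]$, so for $t\in[0,T]$ we may write
\begin{equation*}
X^\vv(t)-X^0(t)=\int_0^t\big\{b(X_s^\vv,\mathscr{L}_{X_s^\vv},\theta_0)-b(X_s^0,\mathscr{L}_{X_s^0},\theta_0)\big\}\,\d s+\vv\int_0^t\si(X_s^\vv,\mathscr{L}_{X_s^\vv})\,\d B(s).
\end{equation*}
Setting $\eta(t):=\E\big(\sup_{-r_0\le s\le t}|X^\vv(s)-X^0(s)|^2\big)$ and noting that $\|X_t^\vv-X_t^0\|_\8^2\le\sup_{-r_0\le s\le t}|X^\vv(s)-X^0(s)|^2$ because the segment norm only re-reads values of the underlying path, it suffices to establish the bound $\eta(t)\le C_T\vv^2$ and then take the supremum over $t\in[0,T]$.

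First I would split the increment via $(a+b)^2\le 2a^2+2b^2$, take the supremum over $s\le t$, and then the expectation. For the drift contribution I would apply the Cauchy–Schwarz inequality in time to gain a factor $T$ and then invoke ({\bf A1}), which produces the two terms $\aa_1\|X_s^\vv-X_s^0\|_\8^2$ and $\aa_2\mathbb{W}_2(\mathscr{L}_{X_s^\vv},\mathscr{L}_{X_s^0})^2$. The distributional term is handled by using the joint law of the pair $(X_s^\vv,X_s^0)$ as an admissible coupling, which gives $\mathbb{W}_2(\mathscr{L}_{X_s^\vv},\mathscr{L}_{X_s^0})^2\le\E\|X_s^\vv-X_s^0\|_\8^2\le\eta(s)$; hence the whole drift part is dominated by $2T(\aa_1+\aa_2)\int_0^t\eta(s)\,\d s$.

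For the stochastic term I would use the BDG inequality to bound $\E\sup_{s\le t}\big|\int_0^s\si(X_u^\vv,\mathscr{L}_{X_u^\vv})\,\d B(u)\big|^2$ by a constant times $\int_0^t\E\|\si(X_u^\vv,\mathscr{L}_{X_u^\vv})\|^2\,\d u$. The linear-growth consequence of ({\bf A1}) (in the form \eqref{r3}, together with $\mathbb{W}_2(\mathscr{L}_{X_u^\vv},\dd_{\zeta_0})^2\le\E\|X_u^\vv\|_\8^2$) reduces this integrand to $c(1+\E\|X_u^\vv\|_\8^2)$, which is bounded uniformly in $u\in[0,T]$ by the a priori moment estimate \eqref{0r4} of Lemma \ref{lem0}. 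Consequently the diffusion term contributes a quantity of order $\vv^2$, and collecting everything yields $\eta(t)\le c_T\vv^2+2T(\aa_1+\aa_2)\int_0^t\eta(s)\,\d s$; Gronwall's inequality then gives $\eta(t)\le c_T\,\e^{2T^2(\aa_1+\aa_2)}\vv^2=:C_T\vv^2$, completing the proof.

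I expect the argument to be essentially routine, the main point requiring care being the interplay between the path-memory structure and the distribution dependence: one must consistently reduce the Wasserstein distance $\mathbb{W}_2(\mathscr{L}_{X_s^\vv},\mathscr{L}_{X_s^0})$ to the $\|\cdot\|_\8$-difference through the natural coupling, and must make sure that the a priori moment bound being invoked is that for the true solution $X^\vv$ (i.e.\ \eqref{0r4}) rather than for the EM scheme, so that the diffusion estimate is genuinely uniform on $[0,T]$ and produces the clean $\vv^2$ rate.
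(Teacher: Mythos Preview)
Your proposal is correct and follows essentially the same route as the paper: the paper likewise bounds $\E\|X_t^\vv-X_t^0\|_\8^2$ by $\E\sup_{0\le s\le t}|X^\vv(s)-X^0(s)|^2$, applies ({\bf A1}) to the drift (reducing the $\mathbb{W}_2$-term via coupling), uses Doob's submartingale inequality plus It\^o's isometry (in place of your BDG) together with \eqref{r3} and the moment bound \eqref{0r4} for the diffusion contribution, and closes with Gronwall. The only cosmetic difference is that the paper tracks a time-dependent constant $2t$ in front of the drift integral rather than the uniform $2T$ you use.
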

\begin{proof}
 Note that
\begin{equation*}
\begin{split}
\E\|X_t^\vv-X_t^0\|_\8^2 \le  \E\Big(\sup_{0\le s\le
t}|X^\vv(s)-X^0(s)|^2\Big)=:A(t,\vv),
\end{split}
\end{equation*}
where we have used  $X^\vv_0=X_0^0=\xi$.
 By H\"older's inequality, Doob's submartingale inequality as well
 as It\^o's isometry, we obtain from ({\bf A1}) and \eqref{r3} that
\begin{equation*}
\begin{split}
A(t,\vv)&\le2\,t\int_0^t\E|b(X_s^\vv,\mathscr{L}_{X_s^\vv},\theta_0)-b(X_s^0,\mathscr{L}_{X_s^0},\theta_0)|^2\d
s+2\,\vv^2\,\E\Big(\sup_{0\le s\le t}\Big|\int_0^s
\si(X_u^\vv,\mathscr{L}_{X_u^\vv})\d B(u)\Big|^2\Big)\\
&\le2\,t\int_0^t\E|b(X_s^\vv,\mathscr{L}_{X_s^\vv},\theta_0)-b(X_s^0,\mathscr{L}_{X_s^0},\theta_0)|^2\d
s+8\,\vv^2 \int_0^t \E\|\si(X_s^\vv,\mathscr{L}_{X_s^\vv})\|^2\d
s\\
&\le2\,t\int_0^t\{\aa_1\E\|X_s^\vv-X_s^0\|^2_\8+\aa_2\mathbb{W}_2(\mathscr{L}_{X_s^\vv},\mathscr{L}_{X_s^0})^2\}\d
s\\
&\quad+c\,\vv^2
\int_0^t\{1+\E\|X_s^\vv\|_\8^2+\mathbb{W}_2(\mathscr{L}_{X_s^\vv},\dd_{\zeta_0})^2\}
\d s\\
&\le c\,t\int_0^t\E\|X_s^\vv-X_s^0\|^2_\8\d s+c\,\vv^2
\int_0^t\{1+\E\|X_s^\vv\|_\8^2\}
\d s\\
&\le c\,t\int_0^tA(s,\vv)\d s+c(1+C_{2,T})\,\vv^2t,
\end{split}
\end{equation*}
where we have used \eqref{0r4} in the last display. As a result,
 \eqref{r6} holds true by Gronwall's inequality.
\end{proof}

\begin{lem}
{\rm Assume that ({\bf A1}) and ({\bf A3}) hold. Then, for any
$\bb\in(0,1)$, there exists $c_\bb>0$ such that
\begin{equation}\label{y2}
\sup_{0\le t\le T}\E\|\hat Y^\vv_{\lfloor
t/\dd\rfloor\dd}-X_t^0\|_\8^2\le c\,\vv^2+c_\bb\dd^\bb.
\end{equation}
}
\end{lem}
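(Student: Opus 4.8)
The plan is to compare the interpolated window process with the ODE window $X_t^0$ by routing through the continuous-time EM scheme $\tt Y^\vv$ of \eqref{r1}, taken with the drift evaluated at the true parameter $\theta_0$ so that $\tt Y^\vv$ approximates the data process $X^\vv$ appearing in Lemma \ref{le5}, and then to close a Gronwall estimate for the auxiliary quantity
\begin{equation*}
G(t):=\E\Big(\sup_{0\le s\le t}|\tt Y^\vv(s)-X^0(s)|^2\Big),\qquad t\in[0,T].
\end{equation*}
The first, and conceptually most delicate, step is to replace the infinite-dimensional window norm $\|\hat Y^\vv_{\lfloor t/\dd\rfloor\dd}-X_t^0\|_\8$ by finitely many grid errors. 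Writing $k=\lfloor t/\dd\rfloor$ and using that, by \eqref{w2}, $\hat Y^\vv_{k\dd}(s)$ is a convex combination of the two neighbouring grid values $\tt Y^\vv((k-i)\dd)=Y^\vv((k-i)\dd)$ and $\tt Y^\vv((k-i-1)\dd)$, I would bound
\begin{equation*}
\|\hat Y^\vv_{k\dd}-X_{k\dd}^0\|_\8\le\max_{0\le j\le k}|\tt Y^\vv(j\dd)-X^0(j\dd)|+c\,\dd,
\end{equation*}
where the additive $c\,\dd$ absorbs $|X^0((k-i)\dd)-X^0(k\dd+s)|$ via the Lipschitz continuity of $X^0$, and where the grid errors with $j<0$ vanish since $\tt Y^\vv$ and $X^0$ share the initial datum $\xi$. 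Since the reference-point shift also gives $\|X^0_{k\dd}-X^0_t\|_\8\le c\,\dd$, this yields $\E\|\hat Y^\vv_{\lfloor t/\dd\rfloor\dd}-X_t^0\|_\8^2\le c\,G(t)+c\,\dd^2$.

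To make this rigorous I must first establish the Lipschitz continuity of the ODE solution $X^0$ on $[-r_0,T]$: on $[-r_0,0]$ this is exactly ({\bf A3}), while on $[0,T]$ it follows from $|X^0(t)-X^0(s)|\le\int_s^t|b(X_u^0,\mathscr{L}_{X_u^0},\theta_0)|\,\d u$ together with the growth bound \eqref{r2} and the a priori moment bound of Lemma \ref{lem0} applied to \eqref{eq6}. With this in hand, the core estimate is a Gronwall argument for $G$. Expanding $\tt Y^\vv(t)-X^0(t)$ from \eqref{r1} and \eqref{eq6} into a drift integral plus a $\vv$-scaled stochastic integral, and applying H\"older's, Doob's and the BDG inequalities together with the Lipschitz bound in ({\bf A1}), I would obtain
\begin{equation*}
G(t)\le c\int_0^t\E\|\hat Y^\vv_{\lfloor u/\dd\rfloor\dd}-X_u^0\|_\8^2\,\d u+c\,\vv^2\int_0^t\big(1+\E\|\hat Y^\vv_{\lfloor u/\dd\rfloor\dd}\|_\8^2\big)\,\d u,
\end{equation*}
where I have used $\W_2(\mathscr{L}_{\hat Y^\vv_{\lfloor u/\dd\rfloor\dd}},\mathscr{L}_{X_u^0})^2\le\E\|\hat Y^\vv_{\lfloor u/\dd\rfloor\dd}-X_u^0\|_\8^2$. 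The second integral is $\le c\,\vv^2$ by Lemma \ref{lem0} together with \eqref{r5}, and the first integral is controlled by $c\int_0^tG(u)\,\d u+c\,\dd^2$ using the window reduction of the previous paragraph. Gronwall's inequality then gives $G(T)\le c(\vv^2+\dd^2)$, and combining with the window reduction yields $\sup_{0\le t\le T}\E\|\hat Y^\vv_{\lfloor t/\dd\rfloor\dd}-X_t^0\|_\8^2\le c\,\vv^2+c\,\dd^2$, which in particular implies \eqref{y2} since $\dd^2\le\dd^\bb$ for $\dd\le1$ and $\bb\in(0,1)$.

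The main obstacle is the window reduction and the accompanying need to avoid circularity in the Gronwall loop: the drift integrand is the full window difference $\|\hat Y^\vv_{\lfloor u/\dd\rfloor\dd}-X_u^0\|_\8$, not a pointwise difference, so it must be dominated by $G(u)$ plus a genuinely lower-order $\dd$-remainder. This is legitimate precisely because, by \eqref{w2}, $\hat Y^\vv_{\lfloor u/\dd\rfloor\dd}$ is built only from grid values at times $\le\lfloor u/\dd\rfloor\dd\le u$, so that $\max_{0\le j\le\lfloor u/\dd\rfloor}|\tt Y^\vv(j\dd)-X^0(j\dd)|^2\le\sup_{0\le s\le u}|\tt Y^\vv(s)-X^0(s)|^2$ and no information beyond time $u$ is used. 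I expect the regularity of $X^0$, which governs the reference-point shift of the frozen window, to be the quantitative source of the $\dd$-power; the Lipschitz bound above even delivers $\dd^2$, so the stated exponent $\dd^\bb$ for any $\bb\in(0,1)$ follows comfortably, and a cruder H\"older-type continuity argument for the window would already suffice.
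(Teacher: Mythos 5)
Your proof is correct, but it takes a genuinely different route from the paper's, and in fact proves a stronger bound. The paper decomposes the error through the data process and the continuous-time EM path, $\hat Y^\vv_{\lfloor t/\dd\rfloor\dd}\to\tt Y^\vv_t\to X^\vv_t\to X^0_t$, and its bottleneck is the estimate \eqref{y4}, $\sup_t\E\|\tt Y^\vv_t-\hat Y^\vv_{\lfloor t/\dd\rfloor\dd}\|_\8^2\le c_\bb\dd^\bb$: there the supremum over the window is bounded by the maximum of one-step increments of $\tt Y^\vv$ over the $M=r_0/\dd$ subintervals, which forces a passage to $\ff{2}{1-\bb}$-th moments and an explicit computation of $\E\sup_{[0,\dd]}|B(s)|^p$; the resulting factor $M^{1-\bb}$ is precisely what degrades the rate from $\dd$ to $\dd^\bb$. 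You instead compare the interpolated window directly to the deterministic limit $X^0$: since the interpolation nodes are exact grid values of $\tt Y^\vv$ and the interpolation weights are convex, the whole window error collapses to $\max_j|\tt Y^\vv(j\dd)-X^0(j\dd)|$ plus the interpolation error of the Lipschitz function $X^0$, which is a deterministic $O(\dd)$; the Brownian modulus of continuity over the window never enters, and a single Gronwall loop on $\E\sup_{s\le t}|\tt Y^\vv(s)-X^0(s)|^2$ yields $c(\vv^2+\dd^2)$, which dominates the claimed $c\vv^2+c_\bb\dd^\bb$ and also subsumes Lemma \ref{le5} rather than invoking it. Assumption ({\bf A3}) enters exactly where it should, namely as the Lipschitz continuity of $X^0=\xi$ on $[-r_0,0]$, which is needed whenever the window reaches into negative times. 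The only step you should spell out fully is the Lipschitz continuity of $X^0$ on $[0,T]$: this requires the a priori bound $\sup_{0\le t\le T}\|X^0_t\|_\8<\8$ (a Gronwall argument for the ODE \eqref{eq6} using the linear growth \eqref{r2}), after which $|X^0(t)-X^0(s)|\le\sup_u|b(X^0_u,\mathscr{L}_{X^0_u},\theta_0)|\,|t-s|$, and one should patch the two Lipschitz estimates across $t=0$; you indicate this, and it is routine.
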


\begin{proof}
Due to \eqref{r6}, for any $t\in[0,T],$
\begin{equation}\label{r7}
\begin{split}
\E\|\hat Y^\vv_{\lfloor t/\dd\rfloor\dd}-X_t^0\|_\8^2&\le3\{\E\|\hat
Y^\vv_{\lfloor t/\dd\rfloor\dd}-\tt Y_t^\vv\|_\8^2+\E\|\tt
Y_t^\vv-X_t^\vv\|_\8^2+\E\|X_t^\vv-X_t^0\|_\8^2\}\\
&\le c\{\vv^2+\E\|\hat Y^\vv_{\lfloor t/\dd\rfloor\dd}-\tt
Y_t^\vv\|_\8^2+\E\|\tt Y_t^\vv-X_t^\vv\|_\8^2\}.
\end{split}
\end{equation}
Next,  exploiting H\"older's inequality, Doob's submartingale
inequality and It\^o's isometry, we derive from ({\bf A1}) and
$X^\vv_0=\tt Y_0^\vv=\xi$ that
\begin{equation*}
\begin{split}
\E\|X_t^\vv-\tt Y_t^\vv\|_\8^2&\le\E\Big(\sup_{0\le s\le
t}|X^\vv(s)-\tt
Y^\vv(s)|^2\Big)\\
&\le2\,t\int_0^t\E|b(X_s^\vv,\mathscr{L}_{X_s^\vv},\theta)-b(\hat
Y^\vv_{\lfloor s/\dd\rfloor\dd},\mathscr{L}_{\hat Y^\vv_{\lfloor
s/\dd\rfloor\dd}},\theta)|^2\d s\\
&\quad+8\,\vv^2\int_0^t\E\|\si(X_s^\vv,\mathscr{L}_{X_s^\vv})-\si(\hat
Y^\vv_{\lfloor s/\dd\rfloor\dd},\mathscr{L}_{\hat Y^\vv_{\lfloor
s/\dd\rfloor\dd}})\|^2\d s\\
&\le2\,t\int_0^t\{\aa_1\E\|X_s^\vv-\hat Y^\vv_{\lfloor
s/\dd\rfloor\dd}\|_\8^2+\aa_2\mathbb{W}_2(\mathscr{L}_{X_s^\vv},\mathscr{L}_{\hat
Y^\vv_{\lfloor
s/\dd\rfloor\dd}})^2\}\d s\\
&\quad+8\,\vv^2\int_0^t\{\bb_1\E\|X_s^\vv-\hat Y^\vv_{\lfloor
s/\dd\rfloor\dd}\|_\8^2+\bb_2\mathbb{W}_2(\mathscr{L}_{X_s^\vv},\mathscr{L}_{\hat
Y^\vv_{\lfloor s/\dd\rfloor\dd}})^2\}\d s.
\end{split}
\end{equation*}
Consequently, we obtain from $\vv\in(0,1)$ that
\begin{equation*}
\begin{split}
\E\|X_t^\vv-\tt Y_t^\vv\|_\8^2 &\le c\,(1+t)\int_0^t\E\|X_s^\vv-\hat
Y^\vv_{\lfloor s/\dd\rfloor\dd}\|_\8^2\d
s\\
&\le c\,(1+t)\int_0^t\E\|X_s^\vv-\tt Y^\vv_s\|_\8^2\d
s+c\,(1+t)\int_0^t\E\|\tt Y^\vv_s-\hat Y^\vv_{\lfloor
s/\dd\rfloor\dd}\|_\8^2\d s.
\end{split}
\end{equation*}
Thus, Gronwall's inequality yields that
\begin{equation}\label{r8}
\E\|X_t^\vv-\tt Y_t^\vv\|_\8^2 \le c\sup_{0\le t\le T}\E\|\tt
Y^\vv_t-\hat Y^\vv_{\lfloor t/\dd\rfloor\dd}\|_\8^2.
\end{equation}
 Substituting \eqref{r8} into \eqref{r7} gives
that
\begin{equation}\label{r9}
\E\|\hat Y^\vv_{\lfloor t/\dd\rfloor\dd}-X_t^0\|_\8^2 \le
c\Big\{\vv^2+\sup_{t\in[0,T]}\E\|\tt Y_t^\vv-\hat Y^\vv_{\lfloor
t/\dd\rfloor\dd}\|_\8^2\Big\}.
\end{equation}
So, to achieve  \eqref{y2}, it remains  to show that, for any
$\bb\in(0,1)$, there exists $c_\bb>0$ such that
\begin{equation}\label{y4}
\sup_{t\in[0,T]}\E\|\tt Y_t^\vv-\hat Y^\vv_{\lfloor
t/\dd\rfloor\dd}\|_\8^2\le c_\bb\dd^\bb.
\end{equation}
For any $t\in[0,T)$, there exists an integer $k_0\in[0,n-1]$ such
that $t\in[k_0\dd,(k_0+1)\dd)$ so that  $\lfloor t/\dd\rfloor=k_0.$
By H\"older's inequality, for any $\bb\in(0,1),$
\begin{equation*}
\begin{split}
\E\|\tt Y_t^\vv-\hat Y^\vv_{k_0\dd}\|_\8^2&=\E\Big(\sup_{-r_0\le
v\le
0}|\tt Y^\vv(t+v)-\hat Y^\vv_{k_0\dd}(v)|^2\Big)\\
&\le\Big(\E\Big(\sup_{-r_0\le v\le
0}|\tt Y^\vv(t+v)-\hat Y^\vv_{k_0\dd}(v)|^{\ff{2}{1-\bb}}\Big)\Big)^{1-\bb}\\
&\le M^{1-\bb}\max_{k=0,\cdots,M-1}\,\Big(\E\Big(\sup_{-(k+1)\dd\le
v\le-k\dd}|\tt Y^\vv(t+v)-\hat
Y^\vv_{k_0\dd}(v)|^{\ff{2}{1-\bb}}\Big)\Big)^{1-\bb},
\end{split}
\end{equation*}
where $M>0$ is an integer such that $r_0=M\dd.$ For any
$v\in[-(k+1)\dd,-k\dd]$ with $k=0,\cdots,M-1$, it follows from
\eqref{w2} that
\begin{equation*}
\begin{split}
\tt Y^\vv(t+v)-\hat Y_{k_0\dd}^\vv(v)&=\ff{(k+1)\dd+v}{\dd}\Big(\tt
Y^\vv(t+v)-Y^\vv((k_0-k)\dd)\Big)\\
&\quad-\ff{k\dd+v}{\dd}\Big(\tt Y^\vv(t+v)-Y^\vv((k_0-k-1)\dd)\Big).
\end{split}
\end{equation*}
As a consequence, we deduce that
\begin{equation}\label{y5}
\begin{split}
&\E\|\tt Y_t^\vv-\hat Y^\vv_{k_0\dd}\|_\8^2\\ &\le c\,
M^{1-\bb}\max_{k=0,\cdots,M-1}\,\Big(\E\Big(\sup_{(k_0-k-1)\dd\le
s\le(k_0-k+1)\dd}|\tt
Y^\vv(s)-Y^\vv((k_0-k)\dd)|^{\ff{2}{1-\bb}}\Big)\Big)^{1-\bb}\\
&\quad+c\,M^{1-\bb}\max_{k=0,\cdots,M-1}\,\Big(\E\Big(\sup_{(k_0-k-1)\dd\le
s\le(k_0-k+1)\dd}|\tt
Y^\vv(s)-Y^\vv((k_0-k-1)\dd)|^{\ff{2}{1-\bb}}\Big)\Big)^{1-\bb}\\
&=:A_1(\vv,\dd)+A_2(\vv,\dd).
\end{split}
\end{equation}

 For any $t\in[l\dd,(l+1)\dd]$ with $l=0,1,\cdots,n-1,$ we have
\begin{equation}\label{y3}
\begin{split}
\E\Big(\sup_{l\dd \le s \le t}|\tt Y^\vv(s)-\tt
Y^\vv(l\dd)|^{\ff{2}{1-\bb}}\Big)&\le c\,\Big\{
\dd^{\ff{2}{1-\bb}}\E|b(\hat Y^\vv_{l\dd},\mathscr{L}_{\hat  Y^\vv_{l\dd}},\theta)|^{\ff{2}{1-\bb}}\\
&\quad+ \E\|\si(\hat
 Y^\vv_{l\dd},\mathscr{L}_{\hat  Y^\vv_{l\dd}})\|^{\ff{2}{1-\bb}}\E\Big(\sup_{l\dd
\le s \le t}|B(s)-B(l\dd)|^{\ff{2}{1-\bb}}\Big)\Big\}\\
&=c\,\Big\{
\dd^{\ff{2}{1-\bb}}\E|b(\hat  Y^\vv_{l\dd},\mathscr{L}_{\hat  Y^\vv_{l\dd}},\theta)|^{\ff{2}{1-\bb}}\\
&\quad+ \E\|\si(\hat  Y^\vv_{l\dd},\mathscr{L}_{\hat
 Y^\vv_{l\dd}})\|^{\ff{2}{1-\bb}}\E\Big(\sup_{0
\le s \le t-l\dd}|B(s)|^{\ff{2}{1-\bb}}\Big)\Big\}\\
\end{split}
\end{equation}
where we have used the fact that $\hat Y^\vv_{l\dd}$ is independent
of $B(t)-B(l\dd)$ for any $t\in [l\dd,(l+1)\dd]$ in the first
inequality and the independent increment property of Brownian motion
in the last display.

Let $(e_i)_{1\le i\le m}$ be the standard orthogonal basis of
$\R^m$. Note that $B_i(t):=\<B(t),e_i\>$ is a scalar Brownian motion
and
\begin{equation*}
\P\Big(\sup_{0\le s\le t}B_i(s)\ge x\Big)=2\P(B_i(t)\ge x),
\end{equation*}
see, for instance, \cite[Theorem 3.15]{K12}. Whence, for any $p>1$,
we deduce that
\begin{equation*}
\begin{split}
\E\Big(\sup_{0\le s\le t}|B(s)|^p\Big)&=\int_0^\8\P\Big(\sup_{0\le
s\le t}|B(s)|^p\ge x\Big)\d x\\
%&=\int_0^\8\P\Big(\sup_{0\le s\le t}\<B(s),e_i\>^2\ge
%x^{2/p}\Big)\d x\\
%&\le\sum_{i=1}^m\int_0^\8\P\Big(\sup_{0\le s\le t}\<B(s),e_i\>\ge
%x^{1/p}/m^{1/2}\Big)\d x\\
&=2\sum_{i=1}^m\int_0^\8\P\Big(B_i(t)\ge
x^{1/p}/m^{1/2}\Big)\d x\\
&=\ff{2}{\ss{2\pi t}}\sum_{i=1}^m\int_0^\8\d
x\int_{\ff{x^{\ff{1}{p}}}{m^{\ff{1}{2}}}}^\8\e^{-\ff{y^2}{2t}}\d
y\\
&\le\ff{2m^{1/2}}{\ss{2\pi t}}\sum_{i=1}^m\int_0^\8x^{-\ff{1}{p}}\d
x\int_{\ff{x^{\ff{1}{p}}}{m^{\ff{1}{2}}}}^\8y\e^{-\ff{y^2}{2t}}\d
y\\
&=\ff{2m^{1/2}t}{\ss{2\pi
t}}\sum_{i=1}^m\int_0^\8x^{-\ff{1}{p}}\e^{-\ff{x^{2/p}}{2mt}}\d x\\
&\le c\, t^{\ff{p}{2}},
\end{split}
\end{equation*}
where in the last step we have utilized the Gamma function
\begin{equation*}
\Gamma(\aa)=\int_0^\8\e^{-x}x^{\aa-1}\d x,~~~~\aa>0.
\end{equation*}
This, combining \eqref{r2} with  \eqref{r3} and \eqref{y3}, yields
that, for any $t\in[l\dd,(l+1)\dd]$,
\begin{equation}\label{y1}
\begin{split}
\E\Big(\sup_{l\dd \le s \le t}|\tt Y^\vv(s)-\tt
Y^\vv(l\dd)|^{\ff{2}{1-\bb}}\Big)&\le
c\dd^{\ff{1}{1-\bb}}\{\E|b(\hat Y^\vv_{l\dd},\mathscr{L}_{\hat
Y^\vv_{l\dd}},\theta)|^{\ff{2}{1-\bb}}+
\E\|\si(\hat Y^\vv_{l\dd},\mathscr{L}_{\hat Y^\vv_{l\dd}})\|^{\ff{2}{1-\bb}}\}\\
&\le c\dd^{\ff{1}{1-\bb}}\{1+\E\|\hat Y^\vv_{l\dd}\|_\8^{\ff{2}{1-\bb}}+\mathbb{W}_2(\mathscr{L}_{\hat Y^\vv_{l\dd}},\dd_{\zeta_0})^{\ff{2}{1-\bb}}\}\\
&\le c\dd^{\ff{1}{1-\bb}}\{1+\E\|\hat Y^\vv_{l\dd}\|_\8^{\ff{2}{1-\bb}}\}\\
&\le c\dd^{\ff{1}{1-\bb}},
\end{split}
\end{equation}
where in the last procedure we have exploited \eqref{r4}.

In the sequel, we divide three cases to show the estimates on
$A_1(\vv,\dd)$ and $A_2(\vv,\dd)$.

\noindent{{\bf Case 1: $k\ge k_0+1$}.} With regard to such case,
$(k_0-k+1)\dd\in[-r_0,0].$ We infer from ({\bf A1}) and \eqref{y5},
in addition to $M\dd=r_0 $, that
\begin{equation*}
A_1(\vv,\dd)+A_2(\vv,\dd)\le cM^{1-\bb}\dd=c\,r_0^{1-\bb}\dd^\bb.
\end{equation*}

\noindent{{\bf Case 2: $k_0=k$}.} For this case,
$t\in[k\dd,(k+1)\dd)$. Again, one gets from \eqref{y5} that
\begin{equation*}
\begin{split}
&A_1(\vv,\dd)+A_2(\vv,\dd)\\
&\le c\, M^{1-\bb}\max_{k=0,\cdots,M-1}\,\Big(\E\Big(\sup_{-\dd\le
s\le\dd}|\tt
Y^\vv(s)-\tt Y^\vv(0)|^{\ff{2}{1-\bb}}\Big)\Big)^{1-\bb}\\
&\quad+c\,M^{1-\bb}\max_{k=0,\cdots,M-1}\,\Big(\E\Big(\sup_{-\dd\le
s\le\dd}|\tt Y^\vv(s)-\tt
Y^\vv(-\dd)|^{\ff{2}{1-\bb}}\Big)\Big)^{1-\bb},
\end{split}
\end{equation*}
where we have employed $Y^\vv(t)=\tt Y(t), t\in[-r_0,0].$ This,
besides ({\bf A3}) and \eqref{y1},   implies that
\begin{equation*}
\begin{split}
A_1(\vv,\dd)+A_2(\vv,\dd)&\le c\dd^\bb+ c\,
M^{1-\bb}\max_{k=0,\cdots,M-1}\,\Big(\E\Big(\sup_{-\dd\le
s\le\dd}|\tt
Y^\vv(s)-\tt Y^\vv(0)|^{\ff{2}{1-\bb}}\Big)\Big)^{1-\bb}\\
&\le c\dd^\bb+ c\,
M^{1-\bb}\max_{k=0,\cdots,M-1}\,\Big(\E\Big(\sup_{0\le s \le
\dd}|\tt Y^\vv(s)-\tt
Y^\vv(0)|^{\ff{2}{1-\bb}}\Big)\Big)^{1-\bb}\\
&\le c\dd^\bb+ c\, M^{1-\bb}\dd\\
&\le c\dd^\bb.
\end{split}
\end{equation*}
\noindent{{\bf Case 3: $k\le k_0-1$}.} Also, by making use of
\eqref{y1}, it follows that
\begin{equation*}
\begin{split}
&A_1(\vv,\dd)+A_2(\vv,\dd)\\
%&\le c\,
%M^{1-\bb}\max_{k=0,\cdots,M-1}\,\Big(\E\Big(\sup_{(k_0-k-1)\dd\le
%s\le (k_0-k+1)\dd}|\tt
%Y^\vv(s)-\tt Y^\vv((k_0-k)\dd)|^{\ff{2}{1-\bb}}\Big)\Big)^{1-\bb}\\
%&\quad+c\,M^{1-\bb}\max_{k=0,\cdots,M-1}\,\Big(\E\Big(\sup_{(k_0-k-1)\dd\le
%s\le(k_0-k+1)\dd}|\tt
%Y^\vv(s)-\tt Y^\vv((k_0-k-1)\dd)|^{\ff{2}{1-\bb}}\Big)\Big)^{1-\bb}\\
&\le c\,
M^{1-\bb}\max_{k=0,\cdots,M-1}\,\Big(\E\Big(\sup_{(k_0-k-1)\dd\le
s\le (k_0-k+1)\dd}|\tt
Y^\vv(s)-\tt Y^\vv((k_0-k-1)\dd)|^{\ff{2}{1-\bb}}\Big)\Big)^{1-\bb}\\
&\quad+c\, M^{1-\bb}\max_{k=0,\cdots,M-1}\,\Big(\E|\tt
Y^\vv((k_0-k-1)\dd)-\tt Y^\vv((k_0-k)\dd)|^{\ff{2}{1-\bb}}\Big)^{1-\bb}\\
&\le c\,
M^{1-\bb}\max_{k=0,\cdots,M-1}\,\Big(\E\Big(\sup_{(k_0-k-1)\dd\le
s\le (k_0-k)\dd}|\tt
Y^\vv(s)-\tt Y^\vv((k_0-k-1)\dd)|^{\ff{2}{1-\bb}}\Big)\Big)^{1-\bb}\\
 &\quad+c\,
M^{1-\bb}\max_{k=0,\cdots,M-1}\,\Big(\E\Big(\sup_{(k_0-k)\dd\le s\le
(k_0-k+1)\dd}|\tt
Y^\vv(s)-\tt Y^\vv((k_0-k)\dd)|^{\ff{2}{1-\bb}}\Big)\Big)^{1-\bb}\\
&\quad+c\, M^{1-\bb}\max_{k=0,\cdots,M-1}\,\Big(\E|\tt
Y^\vv((k_0-k-1)\dd)-\tt Y^\vv((k_0-k)\dd)|^{\ff{2}{1-\bb}}\Big)^{1-\bb}\\
&\le c\dd^\bb.
\end{split}
\end{equation*}
By summing up the three cases above, \eqref{y4} holds true.
\end{proof}

\begin{lem}\label{lem2}
{\rm Let ({\bf A1})-({\bf A3}) hold. Then,
\begin{equation}\label{a7}
\begin{split}
&\dd\sum_{k=1}^n\Lambda^*(\hat Y_{t_{k-1}}^\vv,\theta,\theta_0)\hat\si(\hat Y_{t_{k-1}}^\vv)\Lambda(\hat Y_{t_{k-1}}^\vv,\theta,\theta_0)\\
&\rightarrow\Xi(\theta):=\int_0^T\Lambda^*(X_s^0,\theta,\theta_0)\hat\si(X_s^0)\Lambda(X_s^0,\theta,\theta_0)\d
s
\end{split}
\end{equation}
in $L^1$ as $\vv\rightarrow0$ and $\delta\rightarrow0 $ (i.e.,
$n\rightarrow\8$), in which $\Lambda(\cdot)$ and $\hat\si(\cdot)$
are introduced in \eqref{c1}.

}
\end{lem}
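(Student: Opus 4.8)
The plan is to read the left-hand side of \eqref{a7} as a left-endpoint Riemann sum for $\Xi(\theta)$ that is built from the EM window process $\hat Y^\vv$ instead of from the limiting ODE solution $X^0$, and to separate the error into a stochastic ``process-approximation'' part and a deterministic ``Riemann-sum'' part. Abbreviating the integrand by $G(\zeta):=\Lambda^*(\zeta,\theta,\theta_0)\hat\si(\zeta)\Lambda(\zeta,\theta,\theta_0)$ (with $\Lambda,\hat\si$ as in \eqref{c1}), I would estimate
\[
\E\Big|\dd\sum_{k=1}^n G(\hat Y^\vv_{t_{k-1}})-\int_0^T G(X_s^0)\,\d s\Big|
\le \dd\sum_{k=1}^n\E\big|G(\hat Y^\vv_{t_{k-1}})-G(X^0_{t_{k-1}})\big|
+\Big|\dd\sum_{k=1}^n G(X^0_{t_{k-1}})-\int_0^T G(X_s^0)\,\d s\Big|.
\]
The second summand is non-random, since $X^0$ solves the deterministic equation \eqref{eq6}, so it is a genuine numerical Riemann-sum error; the first summand carries all the probabilistic content.

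The crucial ingredient is a local Lipschitz estimate for $G$. Writing $\Lambda_i=\Lambda(\zeta_i,\theta,\theta_0)$ and $\hat\si_i=\hat\si(\zeta_i)$, I would use the telescoping identity
\[
G(\zeta_1)-G(\zeta_2)=(\Lambda_1-\Lambda_2)^*\hat\si_1\Lambda_1+\Lambda_2^*(\hat\si_1-\hat\si_2)\Lambda_1+\Lambda_2^*\hat\si_2(\Lambda_1-\Lambda_2),
\]
combined with the Lipschitz bounds on $b$ and $(\si\si^*)^{-1}$ from ({\bf A1}) and ({\bf A2}) and the resulting linear growth of $\Lambda$ and $\hat\si$, to obtain
\[
\big|G(\zeta_1)-G(\zeta_2)\big|\le c\,\big(1+\|\zeta_1\|_\8+\|\zeta_2\|_\8+(\E\|\zeta_1\|_\8^2)^{1/2}+(\E\|\zeta_2\|_\8^2)^{1/2}\big)^2\big(\|\zeta_1-\zeta_2\|_\8+\mathbb{W}_2(\mathscr{L}_{\zeta_1},\mathscr{L}_{\zeta_2})\big).
\]
Applying this with $\zeta_1=\hat Y^\vv_{t_{k-1}}$ and $\zeta_2=X^0_{t_{k-1}}$, then invoking the Cauchy--Schwarz inequality, the higher-moment bounds of Lemma \ref{lem0} (to absorb the quadratic growth factor, noting that $X^0$ is a bounded deterministic path), the coupling bound $\mathbb{W}_2(\mathscr{L}_{\hat Y^\vv_{t_{k-1}}},\mathscr{L}_{X^0_{t_{k-1}}})^2\le\E\|\hat Y^\vv_{t_{k-1}}-X^0_{t_{k-1}}\|_\8^2$, and the rate estimate \eqref{y2}, should give $\E|G(\hat Y^\vv_{t_{k-1}})-G(X^0_{t_{k-1}})|\le c\,(\vv^2+\dd^\bb)^{1/2}$ uniformly in $k$. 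Summing over $k$ with weight $\dd$ and using $n\dd=T$ bounds the first summand by a quantity of order $(\vv^2+\dd^\bb)^{1/2}$, which vanishes as $\vv\to0$ and $\dd\to0$.

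For the deterministic Riemann-sum term I would invoke the regularity of $X^0$: by ({\bf A3}) together with \eqref{eq6}, $X^0$ is Lipschitz on $[-r_0,T]$, so the window map $t\mapsto X^0_t\in\C$ is continuous, and hence $t\mapsto G(X^0_t)$ is continuous on the compact interval $[0,T]$ by the continuity of $G$ just established. Uniform continuity then guarantees that the left-endpoint sum $\dd\sum_{k=1}^n G(X^0_{t_{k-1}})$ converges to $\int_0^T G(X^0_s)\,\d s$ as $\dd\to0$. Combining the two parts yields the $L^1$ convergence asserted in \eqref{a7}.

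The main obstacle is the estimate of the stochastic term, and specifically the local Lipschitz bound for the matrix-valued integrand $G$: one must verify the correct (quadratic) polynomial growth of the Lipschitz coefficient and, more delicately, track the law-dependence so that every Wasserstein contribution is ultimately absorbed by the coupling inequality and the moment bounds of Lemma \ref{lem0}, rather than being left uncontrolled. Once $G$ is shown to be locally Lipschitz with quadratic growth and the second moments of $\hat Y^\vv$ are in hand, the summation and the Riemann-sum passage are routine.
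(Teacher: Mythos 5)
Your proposal is correct and follows essentially the same route as the paper: the same telescoping decomposition of $\Lambda^*\hat\si\Lambda$ into three difference terms, the same Lipschitz/linear-growth bounds from ({\bf A1})--({\bf A2}), and the same key estimates \eqref{r4} and \eqref{y2}. The only (harmless) difference is that the paper notes the sum equals $\int_0^T G(\hat Y^\vv_{\lfloor s/\dd\rfloor\dd})\,\d s$ exactly and compares it with $\int_0^T G(X_s^0)\,\d s$ in one step via \eqref{y2}, whereas you compare at the grid points and dispose of the residual deterministic Riemann-sum error separately by uniform continuity.
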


\begin{proof}
It is straightforward to see that
\begin{equation*}
\begin{split}
&\dd\sum_{k=1}^n\Lambda^*(\hat
Y_{t_{k-1}}^\vv,\theta,\theta_0)\hat\si(\hat
Y_{t_{k-1}}^\vv)\Lambda(\hat Y_{t_{k-1}}^\vv,\theta,\theta_0)-
\int_0^T\Lambda^*(X_s^0,\theta,\theta_0)\hat\si(X_s^0)\Lambda(X_s^0,\theta,\theta_0)\d
s\\
&=\int_0^T\Big\{\Lambda^*(\hat Y_{\lfloor
s/\dd\rfloor\dd}^\vv,\theta,\theta_0)\hat\si(\hat Y_{\lfloor
s/\dd\rfloor\dd}^\vv)\Lambda(\hat Y_{\lfloor
s/\dd\rfloor\dd}^\vv,\theta,\theta_0)
-\Lambda^*(X_s^0,\theta,\theta_0)\hat\si(X_s^0)\Lambda(X_s^0,\theta,\theta_0)\Big\}\d
s\\
&=\int_0^T \Big(\Lambda(\hat Y_{\lfloor
s/\dd\rfloor\dd}^\vv,\theta,\theta_0)-\Lambda(X_s^0,\theta,\theta_0)\Big)^*\hat\si(\hat
Y_{\lfloor s/\dd\rfloor\dd}^\vv)\Lambda(\hat Y_{\lfloor
s/\dd\rfloor\dd}^\vv,\theta,\theta_0)\d s\\
&\quad+\int_0^T\Lambda^*(X_s^0,\theta,\theta_0)
 \Big(\hat\si(\hat Y_{\lfloor
s/\dd\rfloor\dd}^\vv)-\hat\si(X_s^0)\Big)\Lambda(\hat Y_{\lfloor
s/\dd\rfloor\dd}^\vv,\theta,\theta_0) \d
s\\
&\quad+\int_0^T\Lambda^*(X_s^0,\theta,\theta_0)
\hat\si(X_s^0)\Big(\Lambda(\hat Y_{\lfloor
s/\dd\rfloor\dd}^\vv,\theta,\theta_0)-\Lambda(X_s^0,\theta,\theta_0)\Big)
\d
s\\
&=:J_1(\vv,\dd)+J_2(\vv,\dd)+J_3(\vv,\dd).
\end{split}
\end{equation*}
Next, for any random variables $\zeta_1,\zeta_2\in\C$ with
$\mathscr{L}_{\zeta_1},\mathscr{L}_{\zeta_2}\in\mathcal {P}_2(\C)$,
observe from ({\bf A1}) that
\begin{equation}\label{a1}
\begin{split}
|\Lambda(\zeta_1,\theta,\theta_0)-\Lambda(\zeta_2,\theta,\theta_0)|&\le|b(\zeta_1,\mathscr{L}_{\zeta_1},\theta_0)-b(\zeta_2,\mathscr{L}_{\zeta_2},\theta_0)|
+|b(\zeta_1,\mathscr{L}_{\zeta_1},\theta)-b(\zeta_2,\mathscr{L}_{\zeta_2},\theta)|\\
%&\quad+|b(\zeta_2,\mathscr{L}_{\zeta_1},\theta_0)-b(\zeta_2,\mathscr{L}_{\zeta_2},\theta_0)|+|b(\zeta_2,\mathscr{L}_{X\zeta_1},\theta)-b(\zeta_2,\mathscr{L}_{\zeta_2},\theta)|\\
&\le
c\Big\{\|\zeta_1-\zeta_2\|_\8+\mathbb{W}_2(\mathscr{L}_{\zeta_1},\mathscr{L}_{\zeta_2})\Big\}.
\end{split}
\end{equation}
%By ({\bf A2}), we have
%\begin{equation}\label{a2}
%J_{11}+J_{12}\le
%c\,(1+\|\zeta_1\|_\8^{q_1}+\|\zeta_2\|_\8^{q_1})\|\zeta_1-\zeta_2\|_\8.
%\end{equation}
%On the other hand, in view of ({\bf A1}), one has
%\begin{equation}\label{a3}
%J_{13}+J_{14}\le
%L_1\mathbb{W}_2(\mathscr{L}_{\zeta_1},\mathscr{L}_{\zeta_2}).
%\end{equation}
%Substituting \eqref{a2} and \eqref{a3} into \eqref{a1} yields that
%\begin{equation}\label{a4}
%|\Lambda(\zeta_1,\theta,\theta_0)-\Lambda(\zeta_2,\theta,\theta_0)|\le
%c\,(1+\|\zeta_1\|_\8^{q_1}+\|\zeta_2\|_\8^{q_1})\|\zeta_1-\zeta_2\|_\8+L_1\mathbb{W}_2(\mathscr{L}_{\zeta_1},\mathscr{L}_{\zeta_2}).
%\end{equation}
For a  random variable  $\zeta\in\C$ with
$\mathscr{L}_\zeta\in\mathcal {P}_2(\C)$,  employing ({\bf A2})
gives that
\begin{equation}\label{a6}
\begin{split}
\|\hat\si(\zeta)\|&\le\|\hat\si(\zeta)-\hat\si(\zeta_0)\|+\|\hat\si(\zeta_0)\|
\le c\,\Big\{1+
 \|\zeta\|_\8+ \mathbb{W}_2(\mathscr{L}_\zeta,\dd_{\zeta_0})\Big\}.
 \end{split}
\end{equation}
Consequently, combining \eqref{r2} with \eqref{a1} and \eqref{a6},
 we deduce that
\begin{align*}
&|J_1(\vv,\dd)|+|J_3(\vv,\dd)|\\&\le c\int_0^T\Big\{\|\hat
Y_{\lfloor
s/\dd\rfloor\dd}^\vv-X_s^0\|_\8+\mathbb{W}_2(\mathscr{L}_{\hat
Y_{\lfloor
s/\dd\rfloor\dd}^\vv},\mathscr{L}_{X_s^0})\Big\}\\
&\quad\times\Big\{1+\|X_s^0\|_\8+
 \|\hat Y_{\lfloor
s/\dd\rfloor\dd}^\vv\|_\8+ \mathbb{W}_2(\mathscr{L}_{\hat Y_{\lfloor
s/\dd\rfloor\dd}^\vv},\dd_{\zeta_0})\Big\}^2\d s\\
%&\quad\times\Big\{1+\|X_s^0\|_\8^{1+q_4}+
% \|Y_{\lfloor
%s/\dd\rfloor\dd}^\vv\|_\8^{1+q_4} +
%\mathbb{W}_2(\mathscr{L}_{Y_{\lfloor
%s/\dd\rfloor\dd}^\vv},\dd_0)\Big\}\d s\\
%&\quad+ c\int_0^T\Big\{(1+\|X_{\lfloor
%s/\dd\rfloor\dd}^\vv\|_\8^{q_1}+\|X_s^0\|_\8^{q_1})\|X_{\lfloor
%s/\dd\rfloor\dd}^\vv-X_s^0\|_\8+L_1\mathbb{W}_2(\mathscr{L}_{X_{\lfloor
%s/\dd\rfloor\dd}^\vv},\mathscr{L}_{X_s^0})\Big\}\\
%&\quad\times\Big\{1+
% \|X_s^0\|_\8^{1+q_2} +
%\mathbb{W}_2(\mathscr{L}_{X_s^0},\dd_0)\Big\}\\
%&\quad\times\Big\{1+
% \|X_s^0\|_\8^{1+q_1} +
%\mathbb{W}_2(\mathscr{L}_{X_s^0},\dd_0)\Big\}\d s\\
%&\le c\int_0^T\Big\{(1+\|X_{\lfloor
%s/\dd\rfloor\dd}^\vv\|_\8^{q_1}+\|X_s^0\|_\8^{q_1})\|Y_{\lfloor
%s/\dd\rfloor\dd}^\vv-X_s^0\|_\8+\mathbb{W}_2(\mathscr{L}_{Y_{\lfloor
%s/\dd\rfloor\dd}^\vv},\mathscr{L}_{X_s^0})\Big\}\\
%&\quad\times\Big\{1+\|X_s^0\|_\8^{2(1+q)}+
% \|X_{\lfloor
%s/\dd\rfloor\dd}^\vv\|_\8^{2(1+ q)} +
%\mathbb{W}_2(\mathscr{L}_{Y_{\lfloor
%s/\dd\rfloor\dd}^\vv},\dd_0)^2\Big\}\d s\\
&\le c\int_0^T\Big\{\|\hat Y_{\lfloor
s/\dd\rfloor\dd}^\vv-X_s^0\|_\8+\ss{\E\|\hat Y_{\lfloor
s/\dd\rfloor\dd}^\vv-X_s^0\|_\8^2}\Big\}\\
&\quad\times\Big\{1+\|X_s^0\|_\8^2+
 \|\hat Y_{\lfloor
s/\dd\rfloor\dd}^\vv\|_\8^2 + \E\|\hat Y_{\lfloor
s/\dd\rfloor\dd}^\vv\|_\8^2\Big\}\d s.
\end{align*}
 This, together with \eqref{r4} and
\eqref{y2} as well as H\"older's inequality, implies that
\begin{equation}\label{a8}
\begin{split}
&\E|J_1(\vv,\dd)|+\E|J_3(\vv,\dd)|\\&\le c\,\int_0^T\ss{\E\|\hat
Y_{\lfloor
s/\dd\rfloor\dd}^\vv-X_s^0\|_\8^2}\Big\{1+\|X_s^0\|_\8^4+\E
 \|\hat Y_{\lfloor
s/\dd\rfloor\dd}^\vv\|_\8^4\Big\}\d s\\
&\rightarrow0
\end{split}
\end{equation}
as  $\vv\rightarrow0$ and $\dd\rightarrow0$. Next,  making use of
({\bf A2}) and \eqref{r2}, we derive that
\begin{equation*}
\begin{split}
 |J_2(\vv,\dd)|&\le c\int_0^T(1+
 \|X_s^0\|_\8)(1+
 \|\hat Y_{\lfloor
s/\dd\rfloor\dd}^\vv\|_\8 + \mathbb{W}_2(\mathscr{L}_{\hat
Y_{\lfloor
s/\dd\rfloor\dd}^\vv},\dd_{\zeta_0}))\\
&\quad\times\Big(\|\hat Y_{\lfloor
s/\dd\rfloor\dd}^\vv-X_s^0\|_\8+\ss{\E\|\hat Y_{\lfloor
s/\dd\rfloor\dd}^\vv-X_s^0\|_\8^2}\Big)\d s.
\end{split}
\end{equation*}
Again, using \eqref{r4} and \eqref{y2} and utilizing H\"older's
inequality gives that  \begin{equation}\label{a9}
\begin{split}
\E|J_2(\vv,\dd)|&\le c\,\int_0^T\ss{\E\|\hat Y_{\lfloor
s/\dd\rfloor\dd}^\vv-X_s^0\|_\8^2}\Big\{1+\E
 \|\hat Y_{\lfloor
s/\dd\rfloor\dd}^\vv\|_\8^2\Big\}\d s\\
&\rightarrow0
\end{split}
\end{equation}
whenever $\vv\rightarrow0$  and $\dd\rightarrow0$. Hence, \eqref{a7}
follows immediately from \eqref{a8} and \eqref{a9}.
\end{proof}

\begin{lem}\label{lem4}
{\rm Let ({\bf A1})-({\bf A3}) hold.  Then,
\begin{equation}\label{s5}
\sum_{k=1}^n\Lambda^*(\hat
Y_{t_{k-1}}^\vv,\theta,\theta_0)\hat\si(\hat
Y_{t_{k-1}}^\vv)P_k(\theta_0)\longrightarrow0
\end{equation}
in $L^1$ as $\vv\rightarrow0$, where $P_k$ is introduced in
\eqref{w1}. }
\end{lem}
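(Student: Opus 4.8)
The plan is to exploit the Euler--Maruyama recursion to turn $P_k(\theta_0)$ into a pure noise increment, so that the sum in \eqref{s5} is revealed as a martingale transform carrying a prefactor $\vv$; its $L^2$-norm (hence its $L^1$-norm) will then be shown to vanish as $\vv\to0$. Concretely, since the observations $(Y^\vv(t_k))$ are generated by the recursion \eqref{q3} at the true parameter $\theta_0$, one has $Y^\vv(t_k)-Y^\vv(t_{k-1})=b(\hat Y^\vv_{t_{k-1}},\mathscr{L}_{\hat Y^\vv_{t_{k-1}}},\theta_0)\dd+\vv\,\si(\hat Y^\vv_{t_{k-1}},\mathscr{L}_{\hat Y^\vv_{t_{k-1}}})\triangle B_k$, and comparing with the definition \eqref{w1} of $P_k$ yields the crucial identity $P_k(\theta_0)=\vv\,\si(\hat Y^\vv_{t_{k-1}},\mathscr{L}_{\hat Y^\vv_{t_{k-1}}})\triangle B_k$. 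Setting $G_{k-1}:=\Lambda^*(\hat Y_{t_{k-1}}^\vv,\theta,\theta_0)\,\hat\si(\hat Y_{t_{k-1}}^\vv)\,\si(\hat Y^\vv_{t_{k-1}},\mathscr{L}_{\hat Y^\vv_{t_{k-1}}})$ --- a $1\times m$ random vector that is $\F_{t_{k-1}}$-measurable, because $\hat Y^\vv_{t_{k-1}}$ is built only from Brownian increments up to time $t_{k-1}$ --- the left-hand side of \eqref{s5} equals $\vv\sum_{k=1}^nG_{k-1}\triangle B_k$.

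Next I would estimate this in $L^2$, which dominates $L^1$ by Cauchy--Schwarz. Because $\triangle B_k$ is independent of $\F_{t_{k-1}}$ with $\E[\triangle B_k\mid\F_{t_{k-1}}]=\mathbf{0}$ and $\E[\triangle B_k(\triangle B_k)^*\mid\F_{t_{k-1}}]=\dd\,I_m$, the increments are orthogonal: for $j<k$ the quantity $G_{j-1}\triangle B_j\,G_{k-1}$ is $\F_{t_{k-1}}$-measurable, so the tower property kills the cross expectation, giving $\E[(G_{j-1}\triangle B_j)(G_{k-1}\triangle B_k)]=0$. Hence $\E|\vv\sum_{k=1}^nG_{k-1}\triangle B_k|^2=\vv^2\sum_{k=1}^n\E|G_{k-1}\triangle B_k|^2=\vv^2\,\dd\sum_{k=1}^n\E|G_{k-1}|^2$, where the last equality again uses $\E[\triangle B_k(\triangle B_k)^*\mid\F_{t_{k-1}}]=\dd\,I_m$.

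It then remains to bound $\E|G_{k-1}|^2$ uniformly in $k,\vv,\dd$. The growth estimates \eqref{r2}, \eqref{r3} and \eqref{a6} show that $|\Lambda(\hat Y^\vv_{t_{k-1}},\theta,\theta_0)|$, $\|\si(\hat Y^\vv_{t_{k-1}},\mathscr{L}_{\hat Y^\vv_{t_{k-1}}})\|$ and $\|\hat\si(\hat Y^\vv_{t_{k-1}})\|$ are each at most $c\,(1+\|\hat Y^\vv_{t_{k-1}}\|_\8+\mathbb{W}_2(\mathscr{L}_{\hat Y^\vv_{t_{k-1}}},\dd_{\zeta_0}))$, so $|G_{k-1}|$ is dominated by a cubic polynomial in $\|\hat Y^\vv_{t_{k-1}}\|_\8$; taking expectations and invoking the uniform moment bound \eqref{r4} of Lemma \ref{lem0} (with $p=6$) yields $\sup_k\E|G_{k-1}|^2\le C$ with $C$ independent of $n,\vv,\dd$. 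Using $n\dd=T$ we conclude $\E|\sum_{k=1}^n\Lambda^*(\hat Y_{t_{k-1}}^\vv,\theta,\theta_0)\hat\si(\hat Y_{t_{k-1}}^\vv)P_k(\theta_0)|^2\le\vv^2\dd\,nC=C\,T\,\vv^2\to0$ as $\vv\to0$, which gives \eqref{s5}.

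The single substantive step is the opening identity $P_k(\theta_0)=\vv\,\si(\cdots)\triangle B_k$; once the sum is exposed as a martingale transform with the explicit factor $\vv$, orthogonality of the Brownian increments together with the a priori moment bound finishes the argument, and --- in contrast to Lemma \ref{lem2} --- no discretization-error estimate is needed, which is precisely why convergence holds merely as $\vv\to0$. I expect the only mildly delicate bookkeeping to be the tracking of the Hilbert--Schmidt and Euclidean norms of the matrix factors when bounding $\E|G_{k-1}|^2$, but this is entirely routine given \eqref{r2}, \eqref{r3}, \eqref{a6} and \eqref{r4}.
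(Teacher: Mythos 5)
Your proof is correct and essentially identical to the paper's: both start from the identity $P_k(\theta_0)=\vv\,\si(\hat Y^\vv_{t_{k-1}},\mathscr{L}_{\hat Y^\vv_{t_{k-1}}})\triangle B_k$, recognize the sum as $\vv$ times a martingale transform (the paper writes it as a stochastic integral with piecewise-constant integrand and applies It\^o's isometry, which is exactly your discrete orthogonality computation), and then bound the second moment of the integrand via \eqref{r2}, \eqref{r3}, \eqref{a6} and the uniform moment estimate \eqref{r4} to obtain a bound of order $\vv$. No substantive difference.
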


\begin{proof}
Note that
\begin{equation*}
\begin{split}
\Upsilon(\vv,\dd):&=\sum_{k=1}^n\Lambda^*(\hat Y_{t_{k-1}}^\vv,\theta,\theta_0)\hat\si(Y_{t_{k-1}}^\vv)P_k(\theta_0)\\
&=\vv\sum_{k=1}^n\Lambda^*(\hat
Y_{t_{k-1}}^\vv,\theta,\theta_0)\hat\si(\hat Y_{t_{k-1}}^\vv)
\si(Y_{t_{k-1}}^\vv,\mathscr{L}_{\hat Y_{t_{k-1}}^\vv})(B(t_k)-B(t_{k-1}))\\
&=\vv\int_0^T\Lambda^*(\hat Y_{\lfloor
s/\dd\rfloor\dd}^\vv,\theta,\theta_0)\hat\si(\hat Y_{\lfloor
s/\dd\rfloor\dd}^\vv)\si(\hat Y_{\lfloor
s/\dd\rfloor\dd}^\vv,\mathscr{L}_{\hat Y_{\lfloor
s/\dd\rfloor\dd}^\vv})\d B(s).
\end{split}
\end{equation*}
Employing  H\"older's inequality and It\^o's isometry and taking
\eqref{r2}, \eqref{r3} and \eqref{a6} into account, we find that
\begin{equation}\label{s4}
\begin{split}
\E|\Upsilon(\vv,\dd)|&\le\vv\Big(\int_0^T\E\|\Lambda^*(\hat
Y_{\lfloor s/\dd\rfloor\dd}^\vv,\theta,\theta_0)\hat\si(\hat
Y_{\lfloor s/\dd\rfloor\dd}^\vv)\si(\hat Y_{\lfloor
s/\dd\rfloor\dd}^\vv,\mathscr{L}_{\hat Y_{\lfloor
s/\dd\rfloor\dd}^\vv})\|^2\d
s\Big)^{1/2}\\
&\le c\,\vv\Big(\int_0^T\Big\{1+\E
 \|\hat Y_{\lfloor
s/\dd\rfloor\dd}^\vv\|_\8^6 + \mathbb{W}_2(\mathscr{L}_{\hat
Y_{\lfloor s/\dd\rfloor\dd}^\vv},\dd_{\zeta_0})^6\Big\}\d
s\Big)^{1/2}\\
&\le c\,\vv\Big(\int_0^T\Big\{1+\E
 \|\hat Y_{\lfloor
s/\dd\rfloor\dd}^\vv\|_\8^6 \Big\}\d
s\Big)^{1/2}\\
&\le c \,\vv,
\end{split}
\end{equation}
where we have applied \eqref{r4} in  the last step. Therefore,
\eqref{s5} is now available from  \eqref{s4}.
\end{proof}

To make the content self-contained, we cite \cite[Theorem 5.9]{V98}
as the following lemma.

\begin{lem}\label{lem3}
{\rm   Let $(M_n)_{n\ge1}$ be random functions and   $M$   a fixed
function of $\theta$ such that, for any $\vv>0 $,
\begin{equation*}
\sup_{\theta\in\Theta}|M_n(\theta)-M(\theta)|\longrightarrow0~~~~\mbox{
in probability }
\end{equation*}
and
$
\sup_{|\theta-\theta_0|\ge\vv}M(\theta)<M(\theta_0).
$
Then, any sequence of estimators $\hat\theta_n$ with
$M_n(\hat\theta_n) \ge M_n(\theta_0)$ converges in probability to
$\theta_0$.
 }
\end{lem}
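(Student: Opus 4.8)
The plan is to argue directly from the definition of convergence in probability: fixing an arbitrary $\vv>0$, I would show that $\P\big(|\hat\theta_n-\theta_0|\ge\vv\big)\to0$ as $n\to\8$. The two hypotheses play complementary roles — the uniform convergence $\sup_{\theta\in\Theta}|M_n(\theta)-M(\theta)|\to0$ controls the random fluctuation of $M_n$ around its deterministic limit, while the \emph{well-separation} inequality $\sup_{|\theta-\theta_0|\ge\vv}M(\theta)<M(\theta_0)$ is purely deterministic and quantifies identifiability. The whole strategy is to show that the bad event $\{|\hat\theta_n-\theta_0|\ge\vv\}$ forces the uniform deviation of $M_n$ from $M$ to be large, and then to invoke the uniform convergence hypothesis.

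First I would extract a quantitative gap from the separation condition by setting
\begin{equation*}
\eta:=M(\theta_0)-\sup_{|\theta-\theta_0|\ge\vv}M(\theta)>0,
\end{equation*}
so that $M(\theta)\le M(\theta_0)-\eta$ whenever $|\theta-\theta_0|\ge\vv$; in particular, on the event $\{|\hat\theta_n-\theta_0|\ge\vv\}$ one has $M(\theta_0)-M(\hat\theta_n)\ge\eta$. The core of the argument is then a sandwiching decomposition: I would write
\begin{equation*}
M(\theta_0)-M(\hat\theta_n)=\big(M(\theta_0)-M_n(\theta_0)\big)+\big(M_n(\theta_0)-M_n(\hat\theta_n)\big)+\big(M_n(\hat\theta_n)-M(\hat\theta_n)\big),
\end{equation*}
use the defining near-maximizer property $M_n(\hat\theta_n)\ge M_n(\theta_0)$ to discard the middle bracket (it is non-positive), and bound each of the two remaining brackets by $\sup_{\theta\in\Theta}|M_n(\theta)-M(\theta)|$. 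This yields $M(\theta_0)-M(\hat\theta_n)\le2\sup_{\theta\in\Theta}|M_n(\theta)-M(\theta)|$, and combined with the lower bound $\eta$ valid on the bad event it gives the inclusion
\begin{equation*}
\{|\hat\theta_n-\theta_0|\ge\vv\}\subseteq\Big\{\sup_{\theta\in\Theta}|M_n(\theta)-M(\theta)|\ge\eta/2\Big\}.
\end{equation*}
Taking probabilities and letting $n\to\8$ closes the argument, since the right-hand probability tends to $0$ by the uniform convergence hypothesis and $\vv>0$ was arbitrary.

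The step I expect to demand the most care is the logical bookkeeping of the sandwich rather than any computation: one must add and subtract $M_n$ at \emph{both} points $\theta_0$ and $\hat\theta_n$, so that the randomness of the estimator enters only through the uniform supremum and through the signed term that the near-maximizer property eliminates. A secondary subtlety is measurability of $\hat\theta_n$, and hence of the events above; this is routine for argmax estimators over the bounded convex set $\Theta$, and I would either appeal to a measurable-selection theorem or simply use that $\hat\theta_n$ is, by assumption, a measurable sequence of estimators.
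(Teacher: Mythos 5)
Your argument is correct. Note that the paper does not prove this lemma at all --- it is quoted verbatim from van der Vaart's \emph{Asymptotic Statistics} (cited there as Theorem 5.9) --- and your proof is precisely the standard argmax-consistency argument for that theorem: the three-term sandwich $M(\theta_0)-M(\hat\theta_n)\le 2\sup_{\theta\in\Theta}|M_n(\theta)-M(\theta)|$ obtained by discarding the non-positive middle bracket via $M_n(\hat\theta_n)\ge M_n(\theta_0)$, combined with the separation gap $\eta>0$, gives exactly the event inclusion you state, and the conclusion follows. The only caveats are the ones you already flag (measurability of $\hat\theta_n$ and of the supremum, which are implicit in the hypotheses), so there is nothing to repair.
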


With Lemmas \ref{lem2}-\ref{lem3} in hand, we  are in the position
to complete the proof of Theorem \ref{th1}.
\begin{proof}[Proof of Theorem \ref{th1}]
From \eqref{eq2}, we infer that
\begin{equation}\label{s6}
\begin{split}
&\Phi_{n,\vv}(\theta)\\&=\delta^{-1}\sum_{k=1}^n\Big\{P_k^*(\theta)\hat\si(\hat Y_{t_{k-1}}^\vv)P_k(\theta)-P_k^*(\theta_0)\hat\si(\hat Y_{t_{k-1}}^\vv)P_k(\theta_0)\Big\}\\
&=\delta^{-1}\sum_{k=1}^n\Big\{\Big(P_k(\theta_0)+\Lambda(\hat
Y_{t_{k-1}}^\vv,\theta,\theta_0)\dd\Big)^*\hat\si(\hat
Y_{t_{k-1}}^\vv) \Big(P_k(\theta_0)
+\Lambda(\hat Y_{t_{k-1}}^\vv,\theta,\theta_0)\dd\Big)\\
&\quad -P_k^*(\theta_0)\hat\si(\hat Y_{t_{k-1}}^\vv)P_k(\theta_0)\Big\}\\
&=2\sum_{k=1}^n\Lambda^*(\hat
Y_{t_{k-1}}^\vv,\theta,\theta_0)\hat\si(\hat
Y_{t_{k-1}}^\vv)P_k(\theta_0)
+\dd\sum_{k=1}^n\Lambda^*(\hat Y_{t_{k-1}}^\vv,\theta,\theta_0)\hat\si(\hat Y_{t_{k-1}}^\vv)\Lambda(\hat Y_{t_{k-1}}^\vv,\theta,\theta_0)\\
&=:\Phi_{n,\vv}^{(1)}(\theta)+\Phi_{n,\vv}^{(2)}(\theta).
\end{split}
\end{equation}
In terms of Lemmas \ref{lem2} and \ref{lem4}, we deduce from
Chebyshev's inequality that
\begin{equation*}
\sup_{\theta\in\Theta}|-\Phi_{n,\vv}(\theta)-(-\Xi(\theta))|\rightarrow0~~~~\mbox{
in probability,}
\end{equation*}
where $\Xi(\cdot)$ is defined as in \eqref{a7}. On the other hand,
for any $\kk>0,$ notice that
\begin{equation*}
 \sup_{|\theta-\theta_0|\ge\kk}(-\Xi(\theta))<-\Xi(\theta_0)=0
\end{equation*}
due to $\Xi(\cdot)>0.$ Moreover, according to the notion of
$\hat\theta_{n,\vv}$, one has $-\Phi_{n,\vv}(
\hat\theta_{n,\vv})\ge-\Phi_{n,\vv}(\theta_0)=0.$ As far as our
present model is concerned, all of the assumptions in Lemma
\ref{lem3} with $M_n(\cdot)=-\Phi_{n,\vv}(\cdot)$ and
$M(\cdot)=-\Xi(\cdot)$ are fulfilled. As a consequence, we conclude
that $\hat\theta_{n,\vv}\rightarrow\theta_0$ in probability as
$\vv\rightarrow0$ and $n\rightarrow\8$, as required.
\end{proof}

\section{The asymptotic distribution of LSE}\label{sec3}
In this section, to begin, we recall some materials on derivatives
for matrix-valued functions and introduce some notation. For a
differentiable mapping $V=(V_1,\cdots,V_d)^*:\R^p\rightarrow\R^d$,
its gradient operator $(\nn_x V)(x)\in\R^d\otimes\R^p$ w.r.t. the
argument $x=(x_1,\cdots,x_p)^*\in\R^p$ is given by
\begin{equation}\label{A2}
(\nn_x V)(x)=\left(\begin{array}{cccc} \ff{\partial}{\partial
x_1}V_1(x)&\ff{\partial}{\partial
x_2}V_1(x)&\cdots&\ff{\partial}{\partial x_p}V_1(x)\\
 \ff{\partial}{\partial
x_1}V_2(x)&\ff{\partial}{\partial
x_2}V_2(x)&\cdots&\ff{\partial}{\partial x_p}V_2(x)\\
\cdots&\cdots&\cdots&\cdots\\
\ff{\partial}{\partial x_1}V_d(x)&\ff{\partial}{\partial
x_2}V_d(x)&\cdots&\ff{\partial}{\partial x_p}V_d(x)\\
 \end{array}
  \right).
\end{equation}
 If $V=(V_1,\cdots,V_d):\R^p\rightarrow (\R^d)^*$ (i.e., the
$d$-dimensional raw vector) is  differentiable, its gradient
operator $(\nn_x V)(x)\in\R^p\otimes\R^d$ w.r.t. the argument
$x=(x_1,\cdots,x_p)^*\in\R^p$ reads as follows
\begin{equation}\label{A3}
(\nn_x V)(x)=\left(\begin{array}{cccc} \ff{\partial}{\partial
x_1}V_1(x)&\ff{\partial}{\partial
x_1}V_2(x)&\cdots&\ff{\partial}{\partial x_1}V_d(x)\\
 \ff{\partial}{\partial
x_2}V_1(x)&\ff{\partial}{\partial
x_2}V_2(x)&\cdots&\ff{\partial}{\partial x_2}V_d(x)\\
\cdots&\cdots&\cdots&\cdots\\
\ff{\partial}{\partial x_p}V_1(x)&\ff{\partial}{\partial
x_p}V_2(x)&\cdots&\ff{\partial}{\partial x_p}V_d(x)\\
 \end{array}
  \right).
\end{equation}
So,   from \eqref{A2} and \eqref{A3}, one has $\nn_x V^*(x)=(\nn_x
V)^*(x)$ for a differentiable function $V:\R^p\rightarrow\R^d$. Let
$V=(V_{ij})_{p\times d}:\R\rightarrow\R^p\otimes\R^d$ be
differentiable. Then, the derivative $\frac{\partial}{\partial
x}V(x)\in\R^p\otimes\R^d$ of the matrix-valued mapping $V$ w.r.t.
the scalar argument $x\in\R$ enjoys the form
\begin{equation}\label{A1}
\ff{\partial}{\partial x}V(x)=\left(\begin{array}{cccc}
\ff{\partial}{\partial x}V_{11}(x)&\ff{\partial}{\partial
x}V_{12}(x)&\cdots&\ff{\partial}{\partial x}V_{1d}(x)\\
\ff{\partial}{\partial x}V_{21}(x)&\ff{\partial}{\partial
x}V_{22}(x)&\cdots&\ff{\partial}{\partial x}V_{2d}(x)\\
\cdots&\cdots&\cdots&\cdots\\
\ff{\partial}{\partial x}V_{p1}(x)&\ff{\partial}{\partial
x}V_{p2}(x)&\cdots&\ff{\partial}{\partial x}V_{pd}(x)\\
\end{array}\right).
\end{equation}
For a differentiable function $V=(V_{ij})_{p\times
d}:\R^p\rightarrow\R^p\otimes\R^d$, the gradient operator, denoted
by $\nn_x V(x)\in\R^p\otimes\R^{pd}$, of $V(x)$ w.r.t. the variable
$x=(x_1,\cdots,x_p)^*\in\R^p$ is formulated as
\begin{equation*}
(\nn_xV)(x)=\Big(\ff{\partial}{\partial
x_1}V(x),\ff{\partial}{\partial
x_2}V(x),\cdots,\ff{\partial}{\partial x_p}V(x)\Big),
\end{equation*}
where $\ff{\partial}{\partial x_i}V(x)$ is defined as in \eqref{A1}.
Moreover, for a differentiable function $V=(V_{ij})_{p\times
d}:\R^p\rightarrow \R^d$, we have
\begin{equation}\label{z1}
(\nn_x^{(2)}V^*)(x):=(\nn_x(\nn_x V^*))(x)=(\nn_x(\nn_x V)^*)(x).
\end{equation}
For $A=(A_1,A_2,\cdots,A_p)\in\R^p\otimes\R^{pd}$ with $A_{k}\in
\R^p\otimes\R^d$, $k=1,\cdots,p,$ and $B\in\R^d$, let's  define
$A\circ B\in\R^p\otimes\R^p$ by
\begin{equation*}
A\circ B=(A_1B,A_2B,\cdots,A_pB).
\end{equation*}
Set, for any $\theta\in\Theta$,
\begin{equation}\label{z3}
I(\theta):=\int_0^T(\nn_\theta
b)^*(X_s^0,\mathscr{L}_{X_s^0},\theta)\hat\si(X_s^0)(\nn_\theta
b)(X_s^0,\mathscr{L}_{X_s^0},\theta)\d s,
\end{equation}
 and, for any random variable $\zeta\in\C$ with
$\mathscr{L}_\zeta\in\mathcal {P}_2(\C)$,
\begin{equation}\label{s0}
\Upsilon(\zeta,\theta_0):=(\nn_\theta
b)^*(\zeta,\mathscr{L}_{\zeta},\theta_0)\hat\si(\zeta)\si(\zeta,\mathscr{L}_{\zeta}).
\end{equation}
Furthermore,  we set
\begin{equation}\label{z2}
\begin{split}
K(\theta):&=-2\int_0^T\Big\{(\nn_\theta^{(2)}
b^*)(X_s^0,\mathscr{L}_{X_s^0},\theta)\circ\Big(\hat\si(X_s^0)
\LL(X_s^0,\theta,\theta_0)\Big)\Big\}\d s,~~~~~\theta\in\Theta.
\end{split}
\end{equation}

Another main result in this paper is presented as below, which
reveals the asymptotic distribution of $\hat\theta_{n,\vv}.$

\begin{thm}\label{th2}
{\rm Let the assumptions of Theorem \ref{th1} hold and suppose
further that ({\bf A2}) and ({\bf A3}) hold and that $I(\cdot)$ and
$K(\cdot)$ defined in \eqref{z3} and \eqref{z2}, respectively, are
continuous. Then,
\begin{equation*}
\vv^{-1}(\hat\theta_{n,\vv}-\theta_0)\rightarrow
I^{-1}(\theta_0)\int_0^T\Upsilon(X_s^0,\theta_0)\d B(s)~~~~\mbox{ in
probability }
\end{equation*}
as $\vv\rightarrow0$ and $n\rightarrow\8$, where $I(\cdot) $ and
$\Upsilon(\cdot)$ are given in \eqref{z3} and \eqref{s0},
respectively.

}
\end{thm}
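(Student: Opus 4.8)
The plan is to argue along the classical route for the asymptotic law of an $M$-estimator: a first-order condition, a Taylor expansion of the score about $\theta_0$, and a separate analysis of the resulting score and Hessian. Since $\Theta$ is open and Theorem \ref{th1} yields $\hat\theta_{n,\vv}\to\theta_0\in\Theta$ in probability, with probability tending to one $\hat\theta_{n,\vv}$ lies in the interior of $\Theta$ and hence solves $(\nn_\theta\Phi_{n,\vv})(\hat\theta_{n,\vv})=0$. A first-order Taylor expansion of the $\R^p$-valued map $\nn_\theta\Phi_{n,\vv}$ then gives
\begin{equation*}
\vv^{-1}(\hat\theta_{n,\vv}-\theta_0)=-\Big(\int_0^1(\nn_\theta^{(2)}\Phi_{n,\vv})\big(\theta_0+u(\hat\theta_{n,\vv}-\theta_0)\big)\,\d u\Big)^{-1}\vv^{-1}(\nn_\theta\Phi_{n,\vv})(\theta_0),
\end{equation*}
so it remains to identify the limits of the rescaled score $\vv^{-1}(\nn_\theta\Phi_{n,\vv})(\theta_0)$ and of the averaged Hessian, and to combine them.

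For the score I would differentiate the decomposition \eqref{s6}. Using $\nn_\theta\Lambda(\zeta,\theta,\theta_0)=-(\nn_\theta b)(\zeta,\mathscr{L}_\zeta,\theta)$ and $\Lambda(\cdot,\theta_0,\theta_0)=0$, the quadratic piece $\Phi_{n,\vv}^{(2)}$ contributes nothing at $\theta_0$, whence
\begin{equation*}
(\nn_\theta\Phi_{n,\vv})(\theta_0)=-2\sum_{k=1}^n(\nn_\theta b)^*(\hat Y_{t_{k-1}}^\vv,\mathscr{L}_{\hat Y_{t_{k-1}}^\vv},\theta_0)\,\hat\si(\hat Y_{t_{k-1}}^\vv)\,P_k(\theta_0).
\end{equation*}
Because the data are generated at the true value, the EM scheme \eqref{q3} gives $P_k(\theta_0)=\vv\,\si(\hat Y_{t_{k-1}}^\vv,\mathscr{L}_{\hat Y_{t_{k-1}}^\vv})\triangle B_k$, so with the notation \eqref{s0},
\begin{equation*}
\vv^{-1}(\nn_\theta\Phi_{n,\vv})(\theta_0)=-2\sum_{k=1}^n\Upsilon(\hat Y_{t_{k-1}}^\vv,\theta_0)\triangle B_k=-2\int_0^T\Upsilon(\hat Y_{\lfloor s/\dd\rfloor\dd}^\vv,\theta_0)\,\d B(s).
\end{equation*}
I would show this converges in $L^2$ (hence in probability) to $-2\int_0^T\Upsilon(X_s^0,\theta_0)\,\d B(s)$: by It\^o's isometry the squared error equals $4\int_0^T\E\|\Upsilon(\hat Y_{\lfloor s/\dd\rfloor\dd}^\vv,\theta_0)-\Upsilon(X_s^0,\theta_0)\|^2\d s$, which tends to zero since $\zeta\mapsto\Upsilon(\zeta,\theta_0)$ is locally Lipschitz (from ({\bf A1}), ({\bf A2}) and ({\bf B1})) and, via Cauchy--Schwarz, the moment bounds \eqref{r4}--\eqref{0r4} together with \eqref{y2} control the resulting products exactly as in Lemma \ref{lem2}. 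Since $X^0$ is deterministic, the limit is the asserted Gaussian-type stochastic integral.

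Differentiating the score once more produces
\begin{equation*}
\begin{split}
(\nn_\theta^{(2)}\Phi_{n,\vv})(\theta)=&-2\sum_{k=1}^n(\nn_\theta^{(2)}b^*)(\hat Y_{t_{k-1}}^\vv,\mathscr{L}_{\hat Y_{t_{k-1}}^\vv},\theta)\circ\big(\hat\si(\hat Y_{t_{k-1}}^\vv)P_k(\theta_0)\big)\\
&-2\dd\sum_{k=1}^n(\nn_\theta^{(2)}b^*)(\hat Y_{t_{k-1}}^\vv,\mathscr{L}_{\hat Y_{t_{k-1}}^\vv},\theta)\circ\big(\hat\si(\hat Y_{t_{k-1}}^\vv)\Lambda(\hat Y_{t_{k-1}}^\vv,\theta,\theta_0)\big)\\
&+2\dd\sum_{k=1}^n(\nn_\theta b)^*(\hat Y_{t_{k-1}}^\vv,\mathscr{L}_{\hat Y_{t_{k-1}}^\vv},\theta)\hat\si(\hat Y_{t_{k-1}}^\vv)(\nn_\theta b)(\hat Y_{t_{k-1}}^\vv,\mathscr{L}_{\hat Y_{t_{k-1}}^\vv},\theta).
\end{split}
\end{equation*}
The first sum carries the factor $P_k(\theta_0)=O(\vv)$ and is $\vv$ times a discrete stochastic integral, hence vanishes in probability; the last two are Riemann sums which, by the argument of Lemma \ref{lem2} now invoking ({\bf B1}) and ({\bf B2}) to handle $\nn_\theta b$ and $\nn_\theta^{(2)}b^*$, converge in $L^1$ to $K(\theta)$ and $2I(\theta)$, respectively (see \eqref{z3} and \eqref{z2}). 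To accommodate the random argument $\theta_0+u(\hat\theta_{n,\vv}-\theta_0)$, I would upgrade this to convergence uniform over a ball $B_\rho(\theta_0)$, again using ({\bf B1}), ({\bf B2}). Consistency then places the integration argument in $B_\rho(\theta_0)$ with probability tending to one, while the continuity of $I(\cdot)$, $K(\cdot)$ and the identity $K(\theta_0)=0$ (which holds because $\Lambda(X_s^0,\theta_0,\theta_0)=0$) give that the averaged Hessian converges in probability to $2I(\theta_0)$, a matrix invertible by the non-degeneracy underlying $\Xi(\cdot)>0$.

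Putting the pieces together, the inverse averaged Hessian converges to $\tfrac12 I^{-1}(\theta_0)$ and the rescaled score to $-2\int_0^T\Upsilon(X_s^0,\theta_0)\,\d B(s)$, so a Slutsky-type argument (matrix inversion being continuous at the invertible $2I(\theta_0)$) yields $\vv^{-1}(\hat\theta_{n,\vv}-\theta_0)\to I^{-1}(\theta_0)\int_0^T\Upsilon(X_s^0,\theta_0)\,\d B(s)$ in probability, as claimed. I expect the main obstacle to be the \emph{uniform-in-$\theta$} convergence of the Hessian needed to control the random intermediate point: obtaining the $L^1$ convergence of the two Riemann sums and then its uniform version forces one to bound products of the norms of $\hat\si$, $\nn_\theta b$ and $\nn_\theta^{(2)}b^*$ against the moment estimates, which is precisely where the second-order regularity ({\bf B2}) becomes essential. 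By contrast the score limit, being a single It\^o-isometry estimate, should be routine.
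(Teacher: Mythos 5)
Your overall architecture coincides with the paper's: an interior first-order condition, a Taylor expansion of the score about $\theta_0$, the rescaled-score limit $-2\int_0^T\Upsilon(X_s^0,\theta_0)\,\d B(s)$ (Lemma \ref{le1}), the averaged-Hessian limit $K(\theta)+2I(\theta)$ with $K(\theta_0)=0$ so that the limit at $\theta_0$ is $2I(\theta_0)$ (Lemma \ref{lem3.3}), and a Slutsky step. Your decomposition of $(\nn_\theta^{(2)}\Phi_{n,\vv})(\theta)$ into the $O(\vv)$ martingale term and the two Riemann sums is exactly the paper's, and your concern about uniformity in $\theta$ over a shrinking ball is the same issue the paper addresses by inserting $\sup_{\theta\in B_{\eta_{n,\vv}}(\theta_0)}$ and the continuity of $K_0(\cdot)$; the paper's indicator construction with $U_{n,\vv}$ and $\mathscr{D}_{n,\vv}$ is only a technical variant of your ``matrix inversion is continuous at $2I(\theta_0)$'' argument.

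The one step that does not go through as you wrote it is the score limit, which you call routine via a single It\^o-isometry estimate. That route requires
\begin{equation*}
\int_0^T\E\|\Upsilon(\hat Y^\vv_{\lfloor s/\dd\rfloor\dd},\theta_0)-\Upsilon(X_s^0,\theta_0)\|^2\,\d s\longrightarrow0,
\end{equation*}
and the Lipschitz-type bound from ({\bf A1}), ({\bf A2}), ({\bf B1}) produces an integrand of order $\|\hat Y^\vv_{\lfloor s/\dd\rfloor\dd}-X_s^0\|_\8^2\,\big(1+\|\hat Y^\vv_{\lfloor s/\dd\rfloor\dd}\|_\8^4\big)$. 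Cauchy--Schwarz then demands a bound on the \emph{fourth} moment of $\|\hat Y^\vv_{\lfloor s/\dd\rfloor\dd}-X_s^0\|_\8$, whereas \eqref{y2} controls only the second moment; so the control is not ``exactly as in Lemma \ref{lem2}'', where the difference enters only to the first power and a single Cauchy--Schwarz against \eqref{y2} suffices. This is precisely why the paper does not claim $L^2$ convergence of the stochastic integral: in Lemma \ref{le1} it proves only that $\int_0^T\|\Upsilon(\hat Y^\vv_{\lfloor s/\dd\rfloor\dd},\theta_0)-\Upsilon(X_s^0,\theta_0)\|^2\,\d s\to0$ \emph{in probability}, via a truncation at level $K$ plus Chebyshev to decouple the fourth powers from the squared difference, and then transfers this to the stochastic integral by the estimate in \cite[Theorem 2.6, P.63]{F98}. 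Your argument can be repaired either by adopting that truncation scheme or by first upgrading \eqref{y2} to a $p$-th moment bound; as stated, the Cauchy--Schwarz step is a genuine gap, albeit a localized and fixable one.
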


Now, we provide an example to demonstrate our main results.
\begin{exa}
{\rm Let
$\theta=(\theta^{(1)},\theta^{(2)})^*\in\Theta_0:=(c_1,c_2)\times(c_3,c_4)\subset\R^2$
for some $c_1<c_2$ and $c_3<c_4.$ For any $\vv\in(0,1)$, consider
the following scalar path-distribution dependent SDE
\begin{equation}\label{d1}
\d X^\vv(t)=\theta^{(1)}+\theta^{(2)}\int_\C
b_0(X^\vv_t,\zeta)\mathscr{L}_{X^\vv_t}(\d\zeta)+\vv(1+|X^\vv(t)|)\,%\int_\C\si_0(X^\vv_t,\zeta)\mathscr{L}_{X^\vv_t}(\d\zeta)
\d B(t),~~~t\in(0,T]
\end{equation}
with the initial value $X_0^\vv=\xi,$ where $\theta\in\Theta_0$ is
an unknown parameter with the true value
$\theta_0=(\theta^{(1)}_0,\theta^{(2)}_0)\in\Theta_0$,  and
$b_0:\C\times\C\rightarrow\R$ satisfy the global Lipschitz
condition, i.e.,
  there
exists a constant $K>0$ such that
\begin{equation}\label{d2}
|b_0(\zeta_1,\zeta_2)-b(\zeta_1',\zeta_2')|%+|\si_0(\zeta_1,\zeta_2)-\si(\bar\zeta_1,\bar\zeta_2)|
\le
K\{|\zeta_1-\zeta_1'|+|\zeta_2-\zeta_2'|\},~~~~~\zeta_1,\zeta_2,\zeta_1',\zeta_2'\in\C.
\end{equation}
For any $\zeta\in\C$, $\mu\in\mathcal {P}_2(\C)$ and
$\theta=(\theta^{(1)},\theta^{(2)})^*$, set
\begin{equation*}
b(\zeta,\mu,\theta):=\theta^{(1)}+\theta^{(2)}\int_\C
b_0(\zeta,\zeta')\mu(\d\zeta') ~~\mbox{ and
}~~\si(\zeta,\mu):=1+|\zeta(0)|.
\end{equation*}
Then, \eqref{d1} can be reformulated as \eqref{eq1}. By a direct
calculation, it follows from \eqref{d2} that, for any
$\mu,\nu\in\mathcal {P}_2(\C)$ and $\zeta_1,\zeta_2\in\C$,
\begin{equation}\label{d3}
\begin{split}
|b(\zeta_1,\mu,\theta)-b(\zeta_2,\nu,\theta)|&=|\theta^{(2)}|\cdot\Big|\int_\C
b_0(\zeta_1,\zeta)\mu(\d\zeta)-\int_\C
b_0(\zeta_2,\zeta')\nu(\d\zeta')\Big|\\
&\le|\theta^{(2)}| \int_\C\int_\C| b_0(\zeta_1,\zeta)-
b_0(\zeta_2,\zeta')|\pi(\d\zeta,\d\zeta')\\
&\le K|\theta^{(2)}|\int_\C\int_\C
\{|\zeta_1-\zeta_2|+|\zeta-\zeta'|\}\pi(\d\zeta,\d\zeta')\\
&\le K(|c_3|\vee|c_4|)\{|\zeta_1-\zeta_2|+\mathbb{W}_1(\mu,\nu)\}\\
&\le K(|c_3|\vee|c_4|)\{|\zeta_1-\zeta_2|+\mathbb{W}_2(\mu,\nu)\},
\end{split}
\end{equation}
in which $\pi\in\mathcal {C}(\mu,\nu)$. On the other hand, for any
$x,y\in\R$ and $\mu,\nu\in\mathcal {P}_2(\R)$, one has
\begin{equation*}
|\si(x,\mu)-\si(y,\nu)|\le|x-y|.
\end{equation*}
Hence, the assumption ({\bf A1}) holds for \eqref{d1}. Next, for any
$x,y\in\R$ and $\mu,\nu\in\mathcal {P}_2(\R)$, we have
\begin{equation*}
|\si^{-2}(x,\mu)-\si^{-2}(y,\nu)|=\Big|\ff{1}{(1+|x|)^2}-\ff{1}{(1+|y|)^2}\Big|\le
4|x-y|.
\end{equation*}
So, ({\bf A2}) is fulfilled. Furthermore, observe that
\begin{equation}\label{d4}
(\nn_\theta b)(\zeta,\mu,\theta)=\Big(1,\int_\C
b_0(\zeta,\zeta')\mu(\d\zeta')\Big)^*~~~\mbox{ and
}~~~(\nn_\theta(\nn_\theta b))(\zeta,\mu,\theta)={\bf 0}_{2\times2},
\end{equation}
where ${\bf 0}_{2\times2}$ stands for the $2\times 2$-zero matrix.
Thus, \eqref{d3} yields that both ({\bf B1})  and ({\bf B2}) hold.
We further assume that the initial value is global Lipschitz, i.e.,
there exists an $L>0$ such that
\begin{equation*}
|\xi(t)-\xi(s)|\le L|t-s|,~~~~t,s\in[-r_0,0].
\end{equation*}
As a consequence, concerning \eqref{d1}, the assumptions ({\bf
A1})-({\bf A3}) and ({\bf B1})-({\bf B2}) hold, respectively.

The discrete-time EM scheme associated with \eqref{d1} is given by
\begin{equation}
Y^\vv(t_k)=Y^\vv(t_{k-1})+\Big(\theta^{(1)}+\theta^{(2)}\int_\C
b_0(\hat Y^\vv_{t_{k-1}},\zeta)\mathscr{L}_{\hat
Y^\vv_{t_{k-1}}}(\d\zeta)\Big)\dd+\vv(1+|Y^\vv(t_{k-1})|)\triangle
B_k,~~~k\ge1,
\end{equation}
with $Y^\vv(t)=X^\vv(t)=\xi(t), t\in[-r_0,0],$ where $(\hat
Y^\vv_{t_k})$ is defined as in \eqref{w2}. According to \eqref{eq2},
the contrast function admits the form below
\begin{equation*}
\Psi_{n,\vv}(\theta)=\vv^{-2}\delta^{-1}\sum_{k=1}^n\ff{1}{(1+|Y^\vv(t_{k-1})|)^2}\Big|Y^\vv(t_k)-Y^\vv(t_{k-1})-\Big(\theta^{(1)}+\theta^{(2)}\int_\C
b_0(\hat Y^\vv_{t_{k-1}},\zeta)\mathscr{L}_{\hat
Y^\vv_{t_{k-1}}}(\d\zeta)\Big)\dd\Big|^2.
\end{equation*}
Observe that
\begin{equation*}
\begin{split}
\ff{\partial}{\partial\theta^{(1)}}\Psi_{n,\vv}(\theta)&=-2\,\vv^{-2}\sum_{k=1}^n\ff{1}{(1+|Y^\vv(t_{k-1})|)^2}\Big\{Y^\vv(t_k)-Y^\vv(t_{k-1})\\
&\quad-\Big(\theta^{(1)}+\theta^{(2)}\int_\C b_0(\hat
Y^\vv_{t_{k-1}},\zeta)\mathscr{L}_{\hat
Y^\vv_{t_{k-1}}}(\d\zeta)\Big)\dd\Big\},
\end{split}
\end{equation*}
and
\begin{equation*}
\begin{split}
\ff{\partial}{\partial\theta^{(2)}}\Psi_{n,\vv}(\theta)&=-2\,\vv^{-2}\sum_{k=1}^n\ff{1}{(1+|Y^\vv(t_{k-1})|)^2}\Big\{Y^\vv(t_k)-Y^\vv(t_{k-1})\\
&\quad-\Big(\theta^{(1)}+\theta^{(2)}\int_\C b_0(\hat
Y^\vv_{t_{k-1}},\zeta)\mathscr{L}_{\hat
Y^\vv_{t_{k-1}}}(\d\zeta)\Big)\dd\Big\} \int_\C b_0(\hat
Y^\vv_{t_{k-1}},\zeta)\mathscr{L}_{\hat Y^\vv_{t_{k-1}}}(\d\zeta).
\end{split}
\end{equation*}
Subsequently, solving the equation below
\begin{equation*}
\ff{\partial}{\partial\theta^{(1)}}\Psi_{n,\vv}(\theta)=\ff{\partial}{\partial\theta^{(2)}}\Psi_{n,\vv}(\theta)=0,
\end{equation*}
we obtain the LSE
$\hat\theta_{n,\vv}=(\hat\theta_{n,\vv}^{(1)},\hat\theta_{n,\vv}^{(2)})^*$
of the unknown parameter
$\theta=(\theta^{(1)},\theta^{(2)})^*\in\Theta_0$ possesses the
formula
\begin{equation*}
\hat\theta_{n,\vv}^{(1)}=\ff{A_2A_5-A_3A_4}{\dd(A_1A_5-A_4^2)}~~~~~\mbox{
and }~~~~~
\hat\theta_{n,\vv}^{(2)}=\ff{A_1A_3-A_2A_4}{\dd(A_1A_5-A_4^2)},
\end{equation*}
where
\begin{equation*}
A_1:=\sum_{k=1}^n\ff{1}{(1+|Y^\vv(t_{k-1})|)^2},~~~~~~~~~~~~~~~~~~~~~~~~~~~~~~
~~A_2:=\sum_{k=1}^n\ff{Y^\vv(t_k)-Y^\vv(t_{k-1})}{(1+|Y^\vv(t_{k-1})|)^2},
\end{equation*}
\begin{equation*}
A_3:=\sum_{k=1}^n\ff{(Y^\vv(t_k)-Y^\vv(t_{k-1}))\int_\C b_0(\hat
Y^\vv_{t_{k-1}},\zeta)\mathscr{L}_{\hat
Y^\vv_{t_{k-1}}}(\d\zeta)}{(1+|Y^\vv(t_{k-1})|)^2},~~~A_4:=\sum_{k=1}^n\ff{\int_\C
b_0(\hat Y^\vv_{t_{k-1}},\zeta)\mathscr{L}_{\hat
Y^\vv_{t_{k-1}}}(\d\zeta)}{(1+|Y^\vv(t_{k-1})|)^2},
\end{equation*}
and
\begin{equation*}
A_5:=\sum_{k=1}^n\ff{\Big(\int_\C b_0(\hat
Y^\vv_{t_{k-1}},\zeta)\mathscr{L}_{\hat
Y^\vv_{t_{k-1}}}(\d\zeta)\Big)^2}{(1+|Y^\vv(t_{k-1})|)^2}.
\end{equation*}
In terms of Theorem \ref{th1}, $\hat\theta_{n,\vv}\rightarrow\theta$
in probability as $\vv\rightarrow0$ and $n\rightarrow\infty$. Next,
from \eqref{d4}, it follows that
\begin{equation*}
I(\theta_0)=\int_0^T\ff{1}{(1+|X_s^0|)^2}\left(\begin{array}{ccc}
  1 & b_0(X_s^0,X_s^0)\\
  b_0(X_s^0,X_s^0) & b_0(X_s^0,X_s^0)^2\\
  \end{array}
  \right)\d
s,
\end{equation*}
and, for $\zeta\in\C,$
\begin{equation*}
\int_0^T\Upsilon(X_s^0,\theta_0)\d
B(s)=\int_0^T\ff{1}{1+|X^0(s)|}\left(\begin{array}{c}
  1 \\
  b_0(X_s^0,X_s^0)\\
  \end{array}
  \right)\d B(s).
\end{equation*}
At last, according to Theorem \ref{th2}, we conclude that
\begin{equation*}
\vv^{-1}(\hat\theta_{n,\vv}-\theta_0)\rightarrow
I^{-1}(\theta_0)\int_0^T\Upsilon(X_s^0,\theta_0)\d B(s)~~~~\mbox{ in
probability }
\end{equation*}
as $\vv\rightarrow0$ and $n\rightarrow\8$ provided that $I(\cdot)$
is positive definite.

}
\end{exa}

Before we proceed to complete the proof of Theorem \ref{th2}, let's
prepare the lemmas below.

\begin{lem}\label{le1}
{\rm Assume that ({\bf A1})- ({\bf A3}) and  ({\bf B1})- ({\bf B2})
hold. Then,
\begin{equation}\label{s1}
\int_0^T\Upsilon(\hat Y_{\lfloor t/\dd\rfloor\dd}^\vv,\theta_0)\d
B(t)\longrightarrow\int_0^T\Upsilon(X_t^0,\theta_0)\d B(t)~~~~\mbox{
in probability }
\end{equation}
as $\vv\rightarrow0$ and $\dd\rightarrow0$. Moreover,
\begin{equation}\label{s2}
\vv^{-1}(\nn_\theta\Phi_{n,\vv})(\theta_0)\rightarrow
-2\int_0^T\Upsilon(X_s^0,\theta_0)\d B(s)~~~~\mbox{ in probability }
\end{equation}
whenever $\vv\rightarrow0$ and $\dd\rightarrow0$.

}
\end{lem}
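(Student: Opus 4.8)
The plan is to establish the two displays in turn, with \eqref{s2} reducing to \eqref{s1} after an explicit differentiation of the contrast function. For \eqref{s1} note that, since $X^0$ solves the deterministic equation \eqref{eq6}, the limit $\int_0^T\Upsilon(X_t^0,\theta_0)\,\d B(t)$ is a Wiener integral while the approximating integrand $\Upsilon(\hat Y_{\lfloor t/\dd\rfloor\dd}^\vv,\theta_0)$ is adapted and piecewise constant on the grid. By It\^o's isometry it then suffices to prove
\begin{equation*}
\int_0^T\E\big\|\Upsilon(\hat Y_{\lfloor t/\dd\rfloor\dd}^\vv,\theta_0)-\Upsilon(X_t^0,\theta_0)\big\|^2\,\d t\lra0
\end{equation*}
as $\vv\to0$ and $\dd\to0$; this yields $L^2$-convergence of the stochastic integrals and hence the asserted convergence in probability.

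To control the integrand I would first record a local Lipschitz estimate for $\Upsilon=(\nn_\theta b)^*\,\hat\si\,\si$. Telescoping the difference $\Upsilon(\zeta_1,\theta_0)-\Upsilon(\zeta_2,\theta_0)$ over the three factors, each factor is globally Lipschitz in $(\zeta,\mathscr{L}_\zeta)$ --- $(\nn_\theta b)$ by ({\bf B1}), $\hat\si$ by ({\bf A2}), $\si$ by ({\bf A1}) --- while the two undifferentiated factors grow at most linearly, by \eqref{r3}, \eqref{a6} and the linear growth of $\nn_\theta b$ inherited from ({\bf B1}). This gives
\begin{equation*}
\big\|\Upsilon(\zeta_1,\theta_0)-\Upsilon(\zeta_2,\theta_0)\big\|\le c\big\{\|\zeta_1-\zeta_2\|_\8+\mathbb{W}_2(\mathscr{L}_{\zeta_1},\mathscr{L}_{\zeta_2})\big\}\big(1+\|\zeta_1\|_\8+\|\zeta_2\|_\8\big)^2.
\end{equation*}
Taking $\zeta_1=\hat Y_{\lfloor t/\dd\rfloor\dd}^\vv$ and $\zeta_2=X_t^0$, and using that $X^0$ is deterministic and bounded on $[0,T]$ so that $\mathbb{W}_2(\mathscr{L}_{\zeta_1},\dd_{X_t^0})\le(\E\|\hat Y_{\lfloor t/\dd\rfloor\dd}^\vv-X_t^0\|_\8^2)^{1/2}$, one squares, applies Cauchy--Schwarz to separate the increment $\hat Y_{\lfloor t/\dd\rfloor\dd}^\vv-X_t^0$ from the growth factor, bounds the increment through \eqref{y2} and the growth factor uniformly through \eqref{r4}, and integrates over $[0,T]$. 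This drives the quantity to $0$, in the same spirit as the treatment of $J_1,J_2,J_3$ in Lemma \ref{lem2}.

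For \eqref{s2} I would differentiate the representation \eqref{s6} of $\Phi_{n,\vv}$. Since $\Lambda(\zeta,\theta,\theta_0)=b(\zeta,\mathscr{L}_\zeta,\theta_0)-b(\zeta,\mathscr{L}_\zeta,\theta)$ we have $\nn_\theta\Lambda(\zeta,\theta,\theta_0)=-(\nn_\theta b)(\zeta,\mathscr{L}_\zeta,\theta)$ and $\Lambda(\zeta,\theta_0,\theta_0)={\bf 0}$; hence the gradient of the quadratic piece $\Phi^{(2)}_{n,\vv}$ carries a factor $\Lambda(\cdot,\theta_0,\theta_0)$ and vanishes at $\theta_0$, leaving only
\begin{equation*}
(\nn_\theta\Phi_{n,\vv})(\theta_0)=-2\sum_{k=1}^n(\nn_\theta b)^*(\hat Y_{t_{k-1}}^\vv,\mathscr{L}_{\hat Y_{t_{k-1}}^\vv},\theta_0)\,\hat\si(\hat Y_{t_{k-1}}^\vv)\,P_k(\theta_0).
\end{equation*}
Because $Y^\vv$ is generated by \eqref{q3} at the true value $\theta_0$, one has $P_k(\theta_0)=\vv\,\si(\hat Y_{t_{k-1}}^\vv,\mathscr{L}_{\hat Y_{t_{k-1}}^\vv})\,\triangle B_k$, exactly as in Lemma \ref{lem4}; recalling \eqref{s0} this gives
\begin{equation*}
\vv^{-1}(\nn_\theta\Phi_{n,\vv})(\theta_0)=-2\sum_{k=1}^n\Upsilon(\hat Y_{t_{k-1}}^\vv,\theta_0)\,\triangle B_k=-2\int_0^T\Upsilon(\hat Y_{\lfloor s/\dd\rfloor\dd}^\vv,\theta_0)\,\d B(s).
\end{equation*}
The right-hand side is $-2$ times the left-hand member of \eqref{s1}, so \eqref{s2} follows immediately from \eqref{s1}.

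The main obstacle is the growth bookkeeping in the first part: after telescoping, the integrand difference carries a quadratic growth factor, so squaring and separating it from the increment by Cauchy--Schwarz actually calls for a fourth-moment version of \eqref{y2} rather than the stated $L^2$ one. This is not a real difficulty --- the Gronwall argument behind \eqref{y2} goes through verbatim for every $L^p$, the moment input \eqref{r4} being available for all $p$ --- but it must be invoked explicitly. A secondary point is to keep the matrix and transpose conventions of \eqref{A2}--\eqref{z1} straight when differentiating the scalar quadratic form $\Lambda^*\hat\si\Lambda$, so that the surviving gradient is precisely $-2(\nn_\theta b)^*\hat\si P_k$ and reassembles into $\Upsilon$.
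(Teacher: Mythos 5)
Your proof is correct and, for the second assertion, coincides with the paper's argument: the identity $\vv^{-1}(\nn_\theta\Phi_{n,\vv})(\theta_0)=-2\int_0^T\Upsilon(\hat Y^\vv_{\lfloor s/\dd\rfloor\dd},\theta_0)\,\d B(s)$, obtained from $\nn_\theta\Lambda=-\nn_\theta b$ and $P_k(\theta_0)=\vv\,\si(\hat Y^\vv_{t_{k-1}},\mathscr{L}_{\hat Y^\vv_{t_{k-1}}})\triangle B_k$, is exactly how the paper reduces \eqref{s2} to \eqref{s1}, and your telescoping of $\Upsilon=(\nn_\theta b)^*\hat\si\,\si$ over its three factors is precisely the paper's decomposition into $\Sigma_1,\Sigma_2,\Sigma_3$. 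Where you genuinely diverge is in how convergence of the stochastic integrals is extracted. You aim at the stronger statement $\int_0^T\E\|\Upsilon(\hat Y^\vv_{\lfloor t/\dd\rfloor\dd},\theta_0)-\Upsilon(X_t^0,\theta_0)\|^2\,\d t\to0$ and conclude by It\^o's isometry; as you correctly flag, after squaring the Lipschitz-times-quadratic-growth bound and applying Cauchy--Schwarz this requires a fourth-moment analogue of \eqref{y2}, which is indeed obtainable by rerunning the Gronwall arguments with BDG in place of Doob's inequality and with \eqref{r4} for higher $p$. The paper instead proves only that $\int_0^T\|\Upsilon(\hat Y^\vv_{\lfloor t/\dd\rfloor\dd},\theta_0)-\Upsilon(X_t^0,\theta_0)\|^2\,\d t\to0$ \emph{in probability} and then invokes the localization estimate $\P(|\int(\cdots)\,\d B|\ge\kk)\le\P(\int\|\cdots\|^2\,\d t\ge\kk^2\rho)+\rho$ from \cite[Theorem 2.6]{F98}; to avoid higher moments of the increment it truncates the growth factor $1+\|\hat Y^\vv_{\lfloor t/\dd\rfloor\dd}-X^0_t\|_\8^4$ at a level $K$ and controls the tail by Chebyshev and Cauchy--Schwarz against \eqref{r4}. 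Your route buys a cleaner $L^2$ statement at the price of upgrading an auxiliary lemma; the paper's buys economy of hypotheses (only the stated $L^2$ bound \eqref{y2}) at the price of the truncation bookkeeping. If you keep your route, the $L^4$ version of \eqref{y2} should be stated and proved explicitly rather than left as a remark.
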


\begin{proof}
We first claim that
\begin{equation}\label{s7}
\int_0^T\|\Upsilon(\hat Y_{\lfloor
t/\dd\rfloor\dd}^\vv,\theta_0)-\Upsilon(X_t^0,\theta_0)\|^2\d
t\rightarrow0~~~~\mbox{ in probability }
\end{equation}
as $\vv\rightarrow0$ and $\dd\rightarrow0.$  For any $\kk>0$ and
$\rho>0$, by the aid of \eqref{s7} and by  making use of
\cite[Theorem 2.6, P.63]{F98}, we have
\begin{equation*}
\begin{split}
&\P\Big(\Big|\int_0^T(\Upsilon(\hat Y_{\lfloor
t/\dd\rfloor\dd}^\vv,\theta_0)-\Upsilon(X_t^0,\theta_0))\d
B(t)\Big|\ge\kk\Big)\\
&\le\P\Big( \int_0^T\|\Upsilon(\hat Y_{\lfloor
t/\dd\rfloor\dd}^\vv,\theta_0)-\Upsilon(X_t^0,\theta_0)\|^2\d  t
\ge\kk^2\rho\Big)+\rho.
\end{split}
\end{equation*}
Thus, \eqref{s1} follows from \eqref{s7} and the arbitrariness of
$\rho.$ So, in what follows, it remains to show that \eqref{s7}
holds true. Observe that
\begin{equation*}
\begin{split}
&\Upsilon(\hat Y_{\lfloor
t/\dd\rfloor\dd}^\vv,\theta_0)-\Upsilon(X_t^0,\theta_0)\\
%&=(\nn_\theta b)^*(X_{\lfloor
%t/\dd\rfloor\dd}^\vv,\mathscr{L}_{X_{\lfloor
%t/\dd\rfloor\dd}^\vv},\theta_0)\hat\si(X_{\lfloor
%t/\dd\rfloor\dd}^\vv)\si(X_{\lfloor
%t/\dd\rfloor\dd}^\vv,\mathscr{L}_{X_{\lfloor
%t/\dd\rfloor\dd}^\vv})\\
%&\quad-(\nn_\theta
%b)^*(X_t^0,\mathscr{L}_{X_t^0},\theta_0)\hat\si(X_t^0)\si(X_t^0,\mathscr{L}_{X_t^0})\\
&=\{(\nn_\theta b)^*(\hat Y_{\lfloor
t/\dd\rfloor\dd}^\vv,\mathscr{L}_{\hat Y_{\lfloor
t/\dd\rfloor\dd}^\vv},\theta_0)-(\nn_\theta
b)^*(X_t^0,\mathscr{L}_{X_t^0},\theta_0)\}\hat\si(\hat Y_{\lfloor
t/\dd\rfloor\dd}^\vv)\si(\hat Y_{\lfloor
t/\dd\rfloor\dd}^\vv,\mathscr{L}_{\hat Y_{\lfloor
t/\dd\rfloor\dd}^\vv})\\
&\quad+(\nn_\theta
b)^*(X_t^0,\mathscr{L}_{X_t^0},\theta_0)\{\hat\si(\hat Y_{\lfloor
t/\dd\rfloor\dd}^\vv)-\hat\si(X_t^0)\}\si(\hat Y_{\lfloor
t/\dd\rfloor\dd}^\vv,\mathscr{L}_{\hat Y_{\lfloor
t/\dd\rfloor\dd}^\vv})\\
&\quad+(\nn_\theta
b)^*(X_t^0,\mathscr{L}_{X_t^0},\theta_0)\hat\si(X_t^0)\{\si(\hat
Y_{\lfloor t/\dd\rfloor\dd}^\vv,\mathscr{L}_{\hat Y_{\lfloor
t/\dd\rfloor\dd}^\vv})-\si(X_t^0,\mathscr{L}_{X_t^0})\\
&=:\Sigma_1(t,\vv,\dd)+\Sigma_2(t,\vv,\dd)+\Sigma_3(t,\vv,\dd).
\end{split}
\end{equation*}
From ({\bf B1}),   \eqref{r3}, and \eqref{a6},  it follows that
\begin{equation}\label{s8}
\begin{split}
&\int_0^T(\|\Sigma_1(t,\vv,\dd)\|^2+\|\Sigma_2(t,\vv,\dd)\|^2)\d t\\
%&\le c\int_0^T\{(1+\|X_{\lfloor
%t/\dd\rfloor\dd}^\vv\|_\8^{2q_1})\|X_{\lfloor
%t/\dd\rfloor\dd}^\vv-X_t^0\zeta_2\|_\8^2+\E\|X_{\lfloor
%t/\dd\rfloor\dd}^\vv-X_t^0\|^2_\8\}\\
%&\quad\times\{1+\|X_{\lfloor
%t/\dd\rfloor\dd}^\vv\|_\8^2+\E\|X_{\lfloor
%t/\dd\rfloor\dd}^\vv\|_\8^2  \}\\
%&\quad\times\{1+
% \|X_{\lfloor
%t/\dd\rfloor\dd}^\vv\|_\8^{2(1+q_4)}  +\E\|X_{\lfloor
%t/\dd\rfloor\dd}^\vv\|_\8^2\}\d s\\
&\le c\int_0^T(1+\|\hat Y_{\lfloor
t/\dd\rfloor\dd}^\vv\|_\8^4)\|\hat Y_{\lfloor
t/\dd\rfloor\dd}^\vv-X_t^0\|_\8^2\d t + \hat\Pi(\vv,\dd)\\
&\le c\int_0^T(1+\|\hat Y_{\lfloor
t/\dd\rfloor\dd}^\vv-X_t^0\|_\8^4+\|X_t^0\|_\8^4)\|\hat Y_{\lfloor
t/\dd\rfloor\dd}^\vv-X_t^0\|_\8^2\d s + \hat\Pi(\vv,\dd)\\
&\le c\int_0^T(1+\|\hat Y_{\lfloor
t/\dd\rfloor\dd}^\vv-X_t^0\|_\8^4)\|\hat Y_{\lfloor
t/\dd\rfloor\dd}^\vv-X_t^0\|_\8^2\d t + \hat\Pi(\vv,\dd),
\end{split}
\end{equation}
where
\begin{equation*}
\hat\Pi(\vv,\dd):=c\int_0^T(1+
 \|\hat Y_{\lfloor
t/\dd\rfloor\dd}^\vv\|_\8^4)\E\|\hat Y_{\lfloor
t/\dd\rfloor\dd}^\vv-X_t^0\|^2_\8\d t.
\end{equation*}
For any $\rho>0$, one gets from \eqref{s8} that
\begin{equation*}
\begin{split}
&\P\Big(\int_0^T(\|\Sigma_1(t,\vv,\dd)\|^2+\|\Sigma_2(t,\vv,\dd)\|^2)\d
t\ge\rho\Big)\\
&\le\P( \hat\Pi(\vv,\dd)\ge\rho/2)+\P\Big(c\int_0^T(1+\|\hat
Y_{\lfloor t/\dd\rfloor\dd}^\vv-X_t^0\|_\8^4)\|\hat Y_{\lfloor
t/\dd\rfloor\dd}^\vv-X_t^0\|_\8^2\d t\ge\ff{\rho}{2}\Big).
\end{split}
\end{equation*}
By the Chebyshev inequality, in addition to \eqref{r4} and
\eqref{y2},
\begin{equation*}
\begin{split}
\P( \hat\Pi(\vv,\dd)\ge\rho/2)&\le\ff{c}{\rho}\int_0^T(1+\E
 \|\hat Y_{\lfloor
t/\dd\rfloor\dd}^\vv\|_\8^4 )\E\|\hat Y_{\lfloor
t/\dd\rfloor\dd}^\vv-X_t^0\|^2_\8\d s\\
&\longrightarrow0
\end{split}
\end{equation*}
as $\vv\rightarrow0$ and $\dd\rightarrow0.$ Also, for any $K>0$, by
Chebyshev's inequality, besides \eqref{r4},
\begin{align*}
&\P\Big(c\int_0^T(1+\|\hat Y_{\lfloor
t/\dd\rfloor\dd}^\vv-X_t^0\|_\8^4)\|\hat Y_{\lfloor
t/\dd\rfloor\dd}^\vv-X_t^0\|_\8^2\d t\ge\ff{\rho}{2}\Big)\\
%&=\P\Big(c\int_0^T(1+\|X_{\lfloor
%t/\dd\rfloor\dd}^\vv-X_t^0\|_\8^{4(1+q)})\|X_{\lfloor
%t/\dd\rfloor\dd}^\vv-X_t^0\|_\8^2{\bf1}_{\{\|X_{\lfloor
%t/\dd\rfloor\dd}^\vv-X_t^0\|_\8\le K\}}\d t\ge\ff{\rho}{2}\Big)\\
%&\quad+\P\Big(c\int_0^T(1+\|X_{\lfloor
%t/\dd\rfloor\dd}^\vv-X_t^0\|_\8^{4(1+q)})\|X_{\lfloor
%t/\dd\rfloor\dd}^\vv-X_t^0\|_\8^2{\bf1}_{\{\|X_{\lfloor
%t/\dd\rfloor\dd}^\vv-X_t^0\|_\8\ge K\}}\d t\ge\ff{\rho}{2}\Big)\\
&\le\P\Big(c(1+K^4)\int_0^T\|\hat Y_{\lfloor
t/\dd\rfloor\dd}^\vv-X_t^0\|_\8^2\d t\ge\ff{\rho}{4}\Big)\\
&\quad+\P\Big(c\int_0^T(1+\|\hat Y_{\lfloor
t/\dd\rfloor\dd}^\vv\|_\8^8){\bf1}_{\{\|\hat Y_{\lfloor
t/\dd\rfloor\dd}^\vv-X_t^0\|_\8\ge K\}}\d t\ge\ff{\rho}{4}\Big)\\
&\le \ff{c(1+K^4)}{\rho}\int_0^T\E\|\hat Y_{\lfloor
t/\dd\rfloor\dd}^\vv-X_t^0\|_\8^2\d t\\
&\quad+\ff{c}{\rho}\int_0^T\E((1+\|\hat Y_{\lfloor
t/\dd\rfloor\dd}^\vv\|_\8^8){\bf1}_{\{\|\hat Y_{\lfloor
t/\dd\rfloor\dd}^\vv-X_t^0\|_\8\ge K\}})\d t\\
&\le \ff{c(1+K^4)}{\rho}\int_0^T\E\|\hat Y_{\lfloor
t/\dd\rfloor\dd}^\vv-X_t^0\|_\8^2\d t\\
&\quad+\ff{c}{\rho}\int_0^T\Big(1+\E\|\hat Y_{\lfloor
t/\dd\rfloor\dd}^\vv\|_\8^{16}\Big)^{1/2}\Big(\P(\|\hat Y_{\lfloor
t/\dd\rfloor\dd}^\vv-X_t^0\|_\8\ge K)\Big)^{1/2}\d t\\
&\le\ff{c(1+K^4)}{\rho}\int_0^T\E\|\hat Y_{\lfloor
t/\dd\rfloor\dd}^\vv-X_t^0\|_\8^2\d t+\ff{c}{\rho
K}\int_0^T\Big(\E\|\hat Y_{\lfloor
t/\dd\rfloor\dd}^\vv-X_t^0\|_\8^2\Big)^{1/2}\d t.
\end{align*}
This, together with \eqref{y2}, leads  to
\begin{equation}\label{a2}
\int_0^T(\|\Sigma_1(t,\vv,\dd)\|^2+\|\Sigma_2(t,\vv,\dd)\|^2)\d
t\longrightarrow0~~~\mbox{ in probability }
\end{equation}
as $\vv\rightarrow0$ and $\dd\rightarrow0$. Furthermore, ({\bf A1}),
\eqref{a6} as well as  ({\bf B1}) imply that
\begin{equation}\label{a3}
\int_0^T \E\|\Sigma_3(t,\vv,\dd)\|^2 \d t\le c\int_0^T\E\|\hat
Y_{\lfloor t/\dd\rfloor\dd}^\vv-X_t^0\|_\8^2\d t\longrightarrow0
\end{equation}
as $\vv\rightarrow0$ and $\dd\rightarrow0$. As a result, \eqref{s7}
follows from \eqref{a2}, \eqref{a3} and Chebyshev's inequality.

For any $\theta\in\Theta$ and random variable $\zeta\in\C$ with
$\mathcal {P}_2(\C)$, note from \eqref{c1} that
\begin{equation*}
(\nn_{\theta}\Lambda)(\zeta,\theta,\theta_0)=-(\nn_\theta
b)(\zeta,\mathscr{L}_\zeta,\theta).
\end{equation*}
A straightforward calculation shows that
\begin{equation*}
\begin{split}
(\nn_\theta\Phi_{n,\vv})(\theta)&=2\sum_{k=1}^n(\nn_\theta\Lambda)^*(\hat
Y_{t_{k-1}}^\vv,\theta,\theta_0)\hat\si(\hat
Y_{t_{k-1}}^\vv)\Big\{P_k(\theta_0)
+\dd\Lambda(\hat Y_{t_{k-1}}^\vv,\theta,\theta_0)\Big\}\\
&=-2\sum_{k=1}^n(\nn_\theta b)^*(\hat
Y_{t_{k-1}}^\vv,\mathscr{L}_{\hat
Y_{t_{k-1}}^\vv},\theta)\hat\si(\hat Y_{t_{k-1}}^\vv)P_k(\theta).
\end{split}
\end{equation*}
Therefore, one has
\begin{equation*}
\begin{split}
\vv^{-1}(\nn_\theta\Phi_{n,\vv})(\theta_0) &=-2\int_0^T\Upsilon(\hat
Y_{\lfloor s/\dd\rfloor\dd}^\vv,\theta_0)\d B(s).
\end{split}
\end{equation*}
Subsequently, \eqref{s2} follows from \eqref{s1} immediately.
\end{proof}

\begin{lem}\label{lem3.3}
{\rm Under the assumptions of Theorem \ref{th2},
\begin{equation}\label{a11}
(\nn_\theta^{(2)}\Phi_{n,\vv})(\theta)\longrightarrow
K_0(\theta):=K(\theta)+2I(\theta)~~~\mbox{ in probability }
\end{equation}
as $\vv\rightarrow0, n\rightarrow\8$, where
$(\nn_\theta^{(2)}\Phi_{n,\vv}), I(\theta), K(\theta)$ are defined
as in \eqref{z1}, \eqref{z3}, and \eqref{z2}, respectively. }
\end{lem}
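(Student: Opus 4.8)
The plan is to differentiate once more the first--order gradient obtained in the proof of Lemma \ref{le1}, and then to identify the three resulting sums with $2I(\theta)$, $K(\theta)$, and a negligible remainder. Recall from that proof the identity
\begin{equation*}
(\nn_\theta\Phi_{n,\vv})(\theta)=2\sum_{k=1}^n(\nn_\theta\Lambda)^*(\hat Y_{t_{k-1}}^\vv,\theta,\theta_0)\hat\si(\hat Y_{t_{k-1}}^\vv)\Big\{P_k(\theta_0)+\dd\,\Lambda(\hat Y_{t_{k-1}}^\vv,\theta,\theta_0)\Big\},
\end{equation*}
together with $(\nn_\theta\Lambda)(\zeta,\theta,\theta_0)=-(\nn_\theta b)(\zeta,\mathscr{L}_\zeta,\theta)$, whence $(\nn_\theta^{(2)}\Lambda^*)=-(\nn_\theta^{(2)}b^*)$. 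First I would apply the product rule in $\theta$, respecting the matrix conventions \eqref{A1}--\eqref{z1} and the contraction $\circ$, to arrive at
\begin{equation*}
\begin{split}
(\nn_\theta^{(2)}\Phi_{n,\vv})(\theta)&=-2\sum_{k=1}^n(\nn_\theta^{(2)}b^*)(\hat Y_{t_{k-1}}^\vv,\mathscr{L}_{\hat Y_{t_{k-1}}^\vv},\theta)\circ\Big(\hat\si(\hat Y_{t_{k-1}}^\vv)P_k(\theta_0)\Big)\\
&\quad-2\dd\sum_{k=1}^n(\nn_\theta^{(2)}b^*)(\hat Y_{t_{k-1}}^\vv,\mathscr{L}_{\hat Y_{t_{k-1}}^\vv},\theta)\circ\Big(\hat\si(\hat Y_{t_{k-1}}^\vv)\Lambda(\hat Y_{t_{k-1}}^\vv,\theta,\theta_0)\Big)\\
&\quad+2\dd\sum_{k=1}^n(\nn_\theta b)^*(\hat Y_{t_{k-1}}^\vv,\mathscr{L}_{\hat Y_{t_{k-1}}^\vv},\theta)\hat\si(\hat Y_{t_{k-1}}^\vv)(\nn_\theta b)(\hat Y_{t_{k-1}}^\vv,\mathscr{L}_{\hat Y_{t_{k-1}}^\vv},\theta).
\end{split}
\end{equation*}

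I would then treat the three sums in turn. The last sum I would regard as the integral over $[0,T]$ of a step function and handle by the Riemann--sum scheme of Lemma \ref{lem2}: bounding differences of $\nn_\theta b$ via ({\bf B1}), of $\hat\si$ via ({\bf A2}) and \eqref{a6}, and using the moment bound \eqref{r4} together with the approximation estimate \eqref{y2}, it converges in $L^1$ to $2\int_0^T(\nn_\theta b)^*\hat\si(\nn_\theta b)\,\d s=2I(\theta)$ with $I$ as in \eqref{z3}. A strictly parallel argument, now invoking ({\bf B2}) to control differences of $\nn_\theta^{(2)}b^*$ and ({\bf A1}) for $\Lambda$, shows that the middle sum converges in $L^1$ to $K(\theta)$ as defined in \eqref{z2}. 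For the first sum I would substitute $P_k(\theta_0)=\vv\,\si(\hat Y_{t_{k-1}}^\vv,\mathscr{L}_{\hat Y_{t_{k-1}}^\vv})\triangle B_k$, valid because $\theta_0$ is the true value in the scheme \eqref{q3}, and recognise the sum as $\vv$ times a discrete stochastic integral; It\^o's isometry combined with \eqref{r4} then bounds its $L^1$--norm by $c\,\vv$, exactly as in Lemma \ref{lem4}, so that this sum tends to $0$. Summing the three limits produces $K(\theta)+2I(\theta)=K_0(\theta)$, and passing from $L^1$--convergence to convergence in probability through Chebyshev's inequality gives \eqref{a11}.

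The step I expect to be the main obstacle is the middle sum, carrying the second--order gradient and the contraction $\circ$. The difficulty is twofold. First, the product--rule computation above must be executed correctly: one has to track the $\R^p\otimes\R^{pd}$--valued object $\nn_\theta^{(2)}b^*$ and contract it against the $\R^d$--vector $\hat\si\Lambda$ precisely according to \eqref{A1}--\eqref{z1}, since a misplaced factor would corrupt the identification of $K(\theta)$. Second, the Riemann--sum comparison of Lemma \ref{lem2} must be upgraded to this term, whose integrand is a product of three linearly growing factors; I would split the discretisation error into three pieces---replacing $\nn_\theta^{(2)}b^*$, then $\hat\si$, then $\Lambda$ by their values along $X^0$---and estimate each using ({\bf B2}), ({\bf A2}), ({\bf A1}) respectively, together with \eqref{r4}, \eqref{y2} and H\"older's inequality. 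The continuity of $I(\cdot)$ and $K(\cdot)$ is not required for this pointwise--in--$\theta$ convergence, but is retained from the hypotheses for use in the subsequent proof of Theorem \ref{th2}.
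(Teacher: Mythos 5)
Your proposal is correct and follows essentially the same route as the paper: the same three-term decomposition of $(\nn_\theta^{(2)}\Phi_{n,\vv})(\theta)$ (the stochastic-integral term handled as in Lemma \ref{lem4} via $P_k(\theta_0)=\vv\,\si\,\triangle B_k$ and It\^o's isometry, and the two Riemann sums handled by the scheme of Lemma \ref{lem2} using ({\bf B1}), ({\bf B2}), \eqref{r4} and \eqref{y2}), yielding $0$, $K(\theta)$ and $2I(\theta)$ respectively. Your remark that continuity of $I(\cdot)$ and $K(\cdot)$ is not needed for this pointwise convergence is also consistent with how the paper uses that hypothesis only later, in the proof of Theorem \ref{th2}.
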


\begin{proof}
By the chain rule, we infer from \eqref{z1} that
\begin{align*}
(\nn_\theta^{(2)}\Phi_{n,\vv})(\theta)&=-2\sum_{k=1}^n(\nn_\theta^{(2)}
b^*)(\hat Y_{t_{k-1}}^\vv,\mathscr{L}_{\hat Y_{t_{k-1}}^\vv},\theta)\circ\Big(\hat\si(\hat Y_{t_{k-1}}^\vv)P_k(\theta)\Big)\\
&\quad-2\sum_{k=1}^n(\nn_\theta b)^*(\hat
Y_{t_{k-1}}^\vv,\mathscr{L}_{\hat
Y_{t_{k-1}}^\vv},\theta)\hat\si(\hat Y_{t_{k-1}}^\vv)(\nn_\theta
P_k)(\theta)\\
&=-2\sum_{k=1}^n(\nn_\theta^{(2)}
b^*)(\hat Y_{t_{k-1}}^\vv,\mathscr{L}_{\hat Y_{t_{k-1}}^\vv},\theta)\circ\Big(\hat\si(\hat Y_{t_{k-1}}^\vv)P_k(\theta_0)\Big)\\
&\quad-2\,\dd\sum_{k=1}^n\Big\{(\nn_\theta^{(2)}
b^*)(\hat Y_{t_{k-1}}^\vv,\mathscr{L}_{\hat Y_{t_{k-1}}^\vv},\theta)\circ\Big(\hat\si(\hat Y_{t_{k-1}}^\vv)\Lambda(\hat Y_{t_{k-1}}^\vv,\theta,\theta_0)\Big)\\
&\quad-(\nn_\theta b)^*(\hat Y_{t_{k-1}}^\vv,\mathscr{L}_{\hat
Y_{t_{k-1}}^\vv},\theta)\hat\si(\hat Y_{t_{k-1}}^\vv)(\nn_\theta
b)(\hat Y_{t_{k-1}}^\vv,\mathscr{L}_{\hat Y_{t_{k-1}}^\vv},\theta)\Big\}\\
&=:\Theta_1(\vv,\dd)+\Theta_2(\vv,\dd).
\end{align*}
Taking ({\bf B2}) into consideration and mimicking the argument of
Lemma \ref{lem4}, we obtain that
\begin{equation*}
\Theta_1(\vv,\dd)\rightarrow0~~~\mbox{ in probability as
}\vv\rightarrow0,~~\dd\rightarrow0.
\end{equation*}
Observe that
\begin{equation*}
\begin{split}
\Theta_2(\vv,\dd)&=-2\int_0^T\Big\{(\nn_\theta^{(2)} b^*)(\hat
Y_{\lfloor s/\dd\rfloor\dd}^\vv,\mathscr{L}_{\hat Y_{\lfloor
s/\dd\rfloor\dd}^\vv},\theta) \circ\Big(\hat\si(\hat Y_{\lfloor
s/\dd\rfloor\dd}^\vv) \Lambda(\hat Y_{\lfloor
s/\dd\rfloor\dd}^\vv,\theta,\theta_0) \Big)\d s\\
&\quad+ 2\int_0^T\Big\{(\nn_\theta b)^*(\hat Y_{\lfloor
s/\dd\rfloor\dd}^\vv,\mathscr{L}_{\hat Y_{\lfloor
s/\dd\rfloor\dd}^\vv},\theta)\hat\si(\hat Y_{\lfloor
s/\dd\rfloor\dd}^\vv)(\nn_\theta b)(\hat Y_{\lfloor
s/\dd\rfloor\dd}^\vv,\mathscr{L}_{\hat Y_{\lfloor
s/\dd\rfloor\dd}^\vv},\theta)\d s\\
&=:\Psi_1(\vv,\dd)+\Psi_2(\vv,\dd).
\end{split}
\end{equation*}
Carrying out an analogous argument to derive Lemma \ref{lem2}, we
infer that
\begin{equation}\label{z4}
\Psi_1(\vv,\dd)\rightarrow K(\theta)~~~\mbox{ in probability }
\mbox{ as } \varepsilon\rightarrow0, \delta\rightarrow0
\end{equation}
by taking ({\bf B2}) into account, and that
\begin{equation}\label{z5}
\Psi_2(\vv,\dd)\rightarrow 2I(\theta)~~~\mbox{ in probability }
\mbox{ as } \varepsilon\rightarrow0, \delta\rightarrow0
\end{equation}
by using ({\bf B1}). Thus, the desired assertion follows from
\eqref{z4} and \eqref{z5} immediately.
\end{proof}

Now we start to finish the argument of Theorem \ref{th2} on the
basis of the previous lemmas.

\begin{proof}[{\bf Proof of Theorem \ref{th2}}]
The original idea on the proof of Theorem \ref{th2} is taken from
\cite{U04}. To make the content self-contained, we herein provide a
sketch of the proof. In terms of Theorem \ref{th1}, there exists a
sequence $\eta_{n,\vv}\rightarrow0$ as $\vv\rightarrow0$ and
$n\rightarrow\8$ such that $\hat\theta_{n,\vv}\in
B_{\eta_{n,\vv}}(\theta_0)\subset\Theta$, $\P$-a.s.  By the Taylor
expansion, one has
\begin{equation}\label{a4}
(\nn_\theta\Phi_{n,\vv})(\hat\theta_{n,\vv})=(\nn_\theta\Phi_{n,\vv})(\theta_0)+D_{n,\vv}(\hat\theta_{n,\vv}-\theta_0),~~~\hat\theta_{n,\vv}\in
B_{\eta_{n,\vv}}(\theta_0)
\end{equation}
with
\begin{equation*}
D_{n,\vv}:=\int_0^1(\nn_\theta^{(2)}\Phi_{n,\vv})(\theta_0
+u(\hat\theta_{n,\vv}-\theta_0))\d u,~~~~\hat\theta_{n,\vv}\in
B_{\eta_{n,\vv}}(\theta_0).
\end{equation*}
 Observe that, for $\hat\theta_{n,\vv}\in
B_{\eta_{n,\vv}}(\theta_0)$,
\begin{equation*}
\begin{split}
\|D_{n,\vv}-K_0(\theta_0)\|&\le\|D_{n,\vv}-(\nn_\theta^{(2)}\Phi_{n,\vv})(\theta_0)\|+\|(\nn_\theta^{(2)}\Phi_{n,\vv})(\theta_0)-K_0(\theta_0)\|\\
&\le\int_0^1\|(\nn_\theta^{(2)}\Phi_{n,\vv})(\theta_0
+u(\hat\theta_{n,\vv}-\theta_0))-(\nn_\theta^{(2)}\Phi_{n,\vv})(\theta_0)\|\d
u\\
&\quad+ \|(\nn_\theta^{(2)}\Phi_{n,\vv})(\theta_0)-K_0(\theta_0)\|\\
&\le \sup_{\theta\in
B_{\eta_{n,\vv}}(\theta_0)}\|(\nn_\theta^{(2)}\Phi_{n,\vv})(\theta)-(\nn_\theta^{(2)}\Phi_{n,\vv})(\theta_0)\|+
\|(\nn_\theta^{(2)}\Phi_{n,\vv})(\theta_0)-K_0(\theta_0)\|\\
&\le\sup_{\theta\in
B_{\eta_{n,\vv}}(\theta_0)}\|(\nn_\theta^{(2)}\Phi_{n,\vv})(\theta)-K_0(\theta)\|+\sup_{\theta\in
B_{\eta_{n,\vv}}(\theta_0)}\|K_0(\theta)-K_0(\theta_0)\|\\
&\quad+2\|(\nn_\theta^{(2)}\Phi_{n,\vv})(\theta_0)-K_0(\theta_0)\|,
\end{split}
\end{equation*}
in which $K_0(\cdot)$ is introduced in \eqref{a11}. This, together
with Lemma \ref{lem3.3} and  continuity of $K_0(\cdot)$, gives that
\begin{equation}\label{a0}
D_{n,\vv}\rightarrow K_0(\theta_0)~~~~\mbox{ in probability }
\end{equation}
as $\vv\rightarrow0$ and $n\rightarrow\8.$ By following the exact
line of \cite[Theorem 2.2]{LSS}, we can deduce that $D_{n,\vv}$ is
invertible on the set
\begin{equation*}
\Gamma_{n,\vv}:=\Big\{\sup_{\theta\in
B_{\eta_{n,\vv}}(\theta_0)}\|(\nn_\theta^{(2)}\Phi_{n,\vv})(\theta)-K_0(\theta_0)\|\le\ff{\aa}{2},~~\hat\theta_{n,\vv}\in
B_{\eta_{n,\vv}}(\theta_0) \Big\}
\end{equation*}
for some constant $\aa>0.$  Let
\begin{equation*}
\mathscr{D}_{n,\vv}=\{D_{n,\vv} \mbox{ is invertible },
\hat\theta_{n,\vv}\in B_{\eta_{n,\vv}}(\theta_0) \}.
\end{equation*}
By virtue of Lemma \ref{lem3.3}, one has
\begin{equation}\label{n1}
\lim_{\vv\rightarrow0,n\rightarrow\8}\P\Big(\sup_{\theta\in
B_{\eta_{n,\vv}}(\theta_0)}\|(\nn_\theta^{(2)}\Phi_{n,\vv})(\theta)-K_0(\theta_0)\|\le\ff{\aa}{2}\Big)=1.
\end{equation}
On the other hand, recall that
\begin{equation}\label{n2}
\lim_{\vv\rightarrow0,n\rightarrow\8}\P\Big(\hat\theta_{n,\vv}\in
B_{\eta_{n,\vv}}(\theta_0)\Big)=1.
\end{equation}
By the fundamental fact: for any events $A,B$,
$\P(AB)=\P(A)+\P(B)-\P(A\cup B)$, we observe that
\begin{equation}\label{n3}
\begin{split}
1\ge\P(\Gamma_{n,\vv})&\ge\P\Big(\sup_{\theta\in
B_{\eta_{n,\vv}}(\theta_0)}\|(\nn_\theta^{(2)}\Phi_{n,\vv})(\theta)-K_0(\theta_0)\|\le\ff{\aa}{2}\Big)\\
&\quad+\P\Big(\hat\theta_{n,\vv}\in
B_{\eta_{n,\vv}}(\theta_0)\Big)-1.
\end{split}
\end{equation}
Thus, taking advantage of \eqref{n1}, \eqref{n2} as well as
\eqref{n3}, we deduce from Sandwich theorem that
\begin{equation}\label{n4}
\P(\mathscr{D}_{n,\vv})\ge \P(\Gamma_{n,\vv})\rightarrow1
\end{equation}
 as $\vv\rightarrow0$ and $n\rightarrow\8$. Set
\begin{equation*}
U_{n,\vv}:=D_{n,\vv}{\bf1}_{\mathscr{D}_{n,\vv}}+I_{p\times
p}{\bf1}_{\mathscr{D}_{n,\vv}^c},
\end{equation*}
where $I_{p\times p}$ is a $p\times p$ identity matrix. For
$S_{n,\vv}:=\vv^{-1}(\hat\theta_{n,\vv}-\theta_0)$, we deduce from
\eqref{a4}  that
\begin{align*}
S_{n,\vv}&=S_{n,\vv}{\bf1}_{\mathscr{D}_{n,\vv}}+ S_{n,\vv}{\bf1}_{\mathscr{D}_{n,\vv}^c}\\
&=U_{n,\vv}^{-1}D_{n,\vv}S_{n,\vv}{\bf1}_{\mathscr{D}_{n,\vv}}+ S_{n,\vv}{\bf1}_{\mathscr{D}_{n,\vv}^c}\\
&=\vv^{-1}U_{n,\vv}^{-1}\{(\nn_\theta\Phi_{n,\vv})(\hat\theta_{n,\vv})-(\nn_\theta\Phi_{n,\vv})(\theta_0)\}{\bf1}_{\mathscr{D}_{n,\vv}}
+ S_{n,\vv}{\bf1}_{\mathscr{D}_{n,\vv}^c}\\
&=-\vv^{-1}U_{n,\vv}^{-1}(\nn_\theta\Phi_{n,\vv})(\theta_0){\bf1}_{\mathscr{D}_{n,\vv}}
+ S_{n,\vv}{\bf1}_{\mathscr{D}_{n,\vv}^c}\\
&\rightarrow I^{-1}(\theta_0)\int_0^T\Upsilon(X_s^0,\theta_0)\d
B(s),
\end{align*}
as $\vv\rightarrow0$ and $n\rightarrow\infty$, where in the forth
identity we dropped the term
$(\nn_\theta\Phi_{n,\vv})(\hat\theta_{n,\vv})$ according to the
notion of LSE and Fermat's lemma, and the last display follows from
Lemma \ref{le1}, \eqref{a0} as well as \eqref{n4} and by noting
$K_0(\theta_0)=2I(\theta_0)$. We therefore complete the proof.
\end{proof}


\begin{thebibliography}{17}
{\small

\setlength{\baselineskip}{0.14in}
\parskip=0pt

\bibitem{B08} Bishwal, J. ~P.~N., \emph{ Parameter Estimation in Stochastic
Differential Equations},  Springer, Berlin, 2008.
\bibitem{Chen}
Chen, M.-F., \emph{From Markov Chains to Non-Equilibrium Particle
Systems.},  Second Ed., World Scientific Publishing Co., Inc.,
River Edge, NJ, 2004.

\bibitem{D76}Dorogovcev, A.~Ja.,  The consistency of an estimate of a
parameter of a stochastic differential equation, {\it Theory Probab.
Math. Statist.}, {\bf10} (1976), 73--82.

\bibitem{DST}  Dos Reis, G.,   Salkeld, W.,  Tugaut, J.,
Freidlin-Wentzell LDPs in path space for McKean-Vlasov equations and
the Functional Iterated Logarithm Law,     arXiv:1708.04961.

\bibitem{F98}Friedman, A., \emph{Stochastic
Differential Equations and Applications}, Vol 1, Academic Press,
INC, 1975.

\bibitem{GS}Gloter, A., S\o rensen, M.,  Estimation
for stochastic differential equations with a small diffusion
coefficient,  {\it Stochastic Process. Appl.}, {\bf119} (2009),
679--699.

\bibitem{HL} Hu, Y., Long,
 H., Least squares estimator for Ornstein-Uhlenbeck processes driven by
 $\alpha$-stable motions, {\it Stochastic Process. Appl.}, {\bf119} (2009), 2465--2480.

\bibitem{Huang} Huang, X.,  Liu, C.,  Wang, F.-Y., Order preservation for path-distribution dependent
SDEs,  arXiv:1710.08569.

\bibitem{HRW} Huang, X., R\"ockner, M., Wang, F.-Y., Nonlinear Fokker-Planck equations for probability
measures on path space and path-distribution dependent SDEs,  arXiv:1709.00556.

\bibitem{IkedaW} Ikeda, N. ,  Watanabe, S.   {\it Stochastic Differential Equations and Diffusion Processes},
Second Ed., North-Holland Mathcmntilcal Library, 24, Amsterdam, 1989.

\bibitem{JM} Jourdain, B.,     M\'{e}l\'{e}ard, S., Propagation of chaos and
fluctuations for a moderate model with smooth initial data, {\it
Ann. Inst. H. Poincar\'{e} Probab. Statist.}, {\bf34} (1998),
727--766.

\bibitem{Ka88}Kasonga, R.~A.,  The consistency of a nonlinear least squares
estimator for diffusion processes, {\it Stochastic Process. Appl.},
{\bf30} (1988), 263--275.

\bibitem{K12} Klebaner, F. C., \emph{Introduction to Stochastic Calculus with
Applications}, Third Ed., Imperial College Press, London, 2012.

\bibitem{Ku01}Kunitomo, N., Takahashi, A., The asymptotic expansion
approach to the valuation of interest rate contingent claims, {\it
Math. Finance}, {\bf11} (2001), 117--151.

\bibitem{Ku04}Kutoyants, Yu.~A.,  Statistical Inference for Ergodic
Diffusion Processes, Springer-Verlag, London, Berlin, Heidelberg,
2004.

\bibitem{LM} Li, J., Min, H., Weak solutions of mean-field stochastic
differential equations, {\it Stoch. Anal. Appl.}, {\bf35} (2017), 542--568.

\bibitem{LMb}
Li, J., Min, H., Weak solutions of mean-field stochastic
differential equations and application to zero-sum stochastic
differential games, {\it SIAM J. Control Optim.}, {\bf54} (2016),
 1826--1858.

\bibitem{Li16} Li, J., Wu, J.-L., On drift
parameter estimation for mean-reversion type stochastic differential
equations with discrete observations,   {\it Adv. Difference Equ.},
2016, Paper No. 90, 23 pp. (https://doi.org/10.1186/s13662-016-0819-1)

\bibitem{LS01} Liptser, R.~S., Shiryaev, A.~N.,  Statistics of Random
Processes: II Applications, Second Edition , Springer-Verlag,
Berlin, Heidelberg, New York, 2001.

\bibitem{LMS} Long, H., Ma, C., Shimizu, Y., Least squares
estimators for stochastic differential equations driven by small
L\'{e}vy noises,
 {\it Stochastic Process. Appl.},{\bf 127} (2017),  1475--1495.

\bibitem{LSS} Long, H., Shimizu, Y., Sun, W., Least squares
estimators for discretely observed stochastic processes driven by
small L\'{e}vy noises, {\it J. Multivariate Anal.}, {\bf116} (2013), 422--439.

\bibitem{M08} Mao, X., \emph{Stochastic Differential Equations and
Applications}, Second Ed., Horwood Publishing Limited, Chichester,
2008.

\bibitem{M05}Masuda, H., Simple estimators for non-linear Markovian trend from
sampled data: I. ergodic cases. MHF Preprint Series 2005-7, Kyushu
University, 2005.

\bibitem{MV}Mishura, Yu.~S., Veretennikov, A.~Yu., Existence and uniqueness theorems
for solutions of McKean--Vlasov stochastic equations,  arXiv:1603.02212v4.

\bibitem{Oksendal} \O ksendal, B., {\it Stochastic Differential Equations. An Introduction
with Applications}, Sixth Ed., Springer-Verlag, Berlin, Heidelberg, New York, 2003.

\bibitem{P199}Prakasa Rao, B.~L.~S.,  \emph{Statistical Inference for Diffusion
Type Processes}, Kendall's Library of Statistics, 8. Edward  Arnold, London, Oxford
University Press, New York, 1999.

\bibitem{Protter} Protter, P., {\it Stochastic Integrations and Differential Equations},
Second edition. Applications of Mathematics (New York), 21. Stochastic Modelling and
Applied Probability. Springer-Verlag, Berlin, 2004.

\bibitem{SY06}
Shimizu, Y., Yoshida, N., Estimation of parameters for diffusion
processes with jumps from discrete observations, {\it Stat.
Inference Stoch. Process.}, {\bf9} (2006), 227--277.

\bibitem{SM03}S\o rensen, M., Uchida, M., Small diffusion asymptotics for
discretely sampled stochastic differential equations, {\it
Bernoulli}, {\bf9} (2003), 1051--1069.

\bibitem{Ta04}Takahashi, A., Yoshida, N.,  An asymptotic expansion scheme
for optimal investment problems,  {\it Stat. Inference Stoch.Process.}, {\bf7} (2004), 153--188.

\bibitem{U04}  Uchida, M., Estimation for discretely observed small diffusions
based on approximate martingale estimating functions, {\it Scand. J.
Statist.}, {\bf31} (2004), 553--566.

\bibitem{U08} Uchida, M., Approximate martingale estimating functions
for stochastic differential equations with small noises, {\it
Stochastic Process. Appl.}, {\bf 118} (2008), 1706--1721.

\bibitem{V98}
 van der Vaart, A.~W., {\it Asymptotic Statistics}, Cambridge Series in
Statistical and Probabilistic Mathematics, 3, Cambridge
University Press, Cambridge, 1998.

\bibitem{Ve}  Veretennikov, A.~Yu., On ergodic measures for McKean-Vlasov stochastic
equations, In {\it Monte Carlo and Quasi-Monte Carlo Methods 2004},
471--486, Springer, Berlin, 2006.

\bibitem{W16}Wang, F.-Y., Distribution-Dependent SDEs for Landau Type Equations,
arXiv:1606.05843.

\bibitem{Wen}Wen, J., Wang, X., Mao, S., Xiao, X., Maximum likelihood
estimation of McKean-Vlasov stochastic differential equation and its
application,  {\it Appl. Math. Comput.}, {\bf274} (2016), 237--246.

\bibitem{Y1992b}Yoshida, N.,  Asymptotic expansion for statistics related
to small diffusions, {\it J. Japan Statist. Soc.}, {\bf22} (1992),
139--159.


}
\end{thebibliography}
\end{document}